\let\oldlabel=\label
\def\prellabel{\marginparsep=1em\marginparwidth=44pt
    \def\label##1{\oldlabel{##1}\ifmmode\else\ifinner\else
         \marginpar{{\footnotesize\ \\ \tt
                    ##1}}\fi\fi}}
\theoremstyle{plain}
\newtheorem{thm}{Theorem}[section]
\newtheorem{prop}[thm]{Proposition}
\newtheorem{cor}[thm]{Corollary}
\newtheorem{lemma}[thm]{Lemma}
\newtheorem{conjecture}[thm]{Conjecture}
\theoremstyle{definition}
\newtheorem{defn}[thm]{Definition}
\newtheorem{ex}[thm]{Example}
\newcommand{\AAA}{{\mathbb A}}
\newcommand{\M}{{\mathcal M}}
\newcommand{\NN}{{\mathbb N}}
\newcommand{\PP}{{\mathbb P}}
\newcommand{\QQ}{{\mathbb Q}}
\newcommand{\RR}{{\mathbb R}}
\newcommand{\ZZ}{{\mathbb Z}}
\newcommand{\one}{{\bf 1}}
\newcommand{\mm}{\mathfrak m}
\DeclareMathOperator{\CS}{CS}
\DeclareMathOperator{\HS}{HS}
\DeclareMathOperator{\GCD}{GCD}
\DeclareMathOperator{\LCM}{LCM}
\DeclareMathOperator{\gin}{gin}
\DeclareMathOperator{\inid}{in}
\DeclareMathOperator{\gdeg}{gdeg}
\DeclareMathOperator{\Ker}{Ker}
\DeclareMathOperator{\reg}{reg}
\DeclareMathOperator{\rank}{rank}
\DeclareMathOperator{\MDeg}{Deg}
\DeclareMathOperator{\GDeg}{GDeg}
\DeclareMathOperator{\length}{length}
\title{Cartwright-Sturmfels ideals associated to graphs and linear spaces}
\author{A. Conca}
\address{Dipartimento di Matematica, 
Universit\`a di Genova, Via Dodecaneso 35, 
I-16146 Genova, Italy}
\email{conca@dima.unige.it}
\author{E. De Negri}
\address{Dipartimento di Matematica, 
Universit\`a di Genova, Via Dodecaneso 35, 
I-16146 Genova, Italy}
\email{denegri@dima.unige.it}
\author{E. Gorla}
\address{Institut de Math\'ematiques, Universit\'e de Neuch\^atel, Rue Emile-Argand 11, CH-2000
  Neuch\^atel, Switzerland}  
\email{elisa.gorla@unine.ch}
\thanks{The first two authors were partially supported by INdAM-GNSAGA. The third author was partially supported by the Swiss National Science
  Foundation under grant no. 200021\_150207.}
\subjclass[2010]{Primary 13C40, 13P10, 05E40. Secondary 14M99, 68T45}
\begin{document}

\begin{abstract}
Inspired by work of Cartwright and Sturmfels, in~\cite{CDG2} we introduced two classes of multigraded ideals named after them.  These ideals  are defined in terms of properties of their multigraded generic initial ideals. The goal of this paper is showing that three families of ideals that have recently attracted the attention of researchers are Cartwright-Sturmfels ideals. More specifically, we prove that  binomial edge ideals, multigraded homogenizations of linear spaces, and multiview ideals are Cartwright-Sturmfels ideals, hence recovering and extending recent  results of Herzog, Hibi, Hreinsdottir, Kahle, and Rauh \cite{HHHKR}, Ohtani \cite{O}, Ardila and Boocher \cite{AB},  Aholt, Sturmfels, and Thomas \cite{AST}, and Binglin Li \cite{Binglin}. We also propose a conjecture on the rigidity of local cohomology modules of Cartwright-Sturmfels ideals, that was inspired by a theorem of Brion. We provide some evidence for the conjecture by proving it in the monomial case. 
\end{abstract}

\maketitle

\section*{Introduction}

Inspired by the work of Cartwright and Sturmfels \cite{CS}, in~\cite{CDG2} we introduced two families of multigraded ideals, namely Cartwright-Sturmfels ideals and Cartwright-Sturmfels$^*$ ideals. Both families are characterized  by means of  multigraded generic initial  ideals. Cartwright-Sturmfels (CS for short) ideals are the $\ZZ^n$-graded ideals whose multigraded generic initial  ideal is radical, while Cartwright-Sturmfels$^*$ (CS$^*$)  ideals are the $\ZZ^n$-graded ideals whose multigraded generic initial ideal has a system of generators which involves at most one variable of degree $e_i\in \ZZ^n$ for every $i=1,\dots,n$.

Ideals with a minimal system of generators that  is also a universal Gr\"obner basis are called robust.  Being robust is a very strong property and robust ideals have attracted a lot of interest, especially in recent years, see e.g.~\cite{BZ, BR, CDG1,CDG2,CDG3, K, S, Su, SZ}. 
Cartwright-Sturmfels$^*$ ideals are ``very" robust in the sense that every multigraded minimal system of generators of a CS$^*$ ideal is a universal Gr\"obner basis. Furthermore a Cartwright-Sturmfels$^*$ ideal has the same graded Betti numbers as its initial ideals. 
It turns out that  CS ideals and their initial ideals are radical and very often universal Gr\"obner bases of CS ideals can be kept under control as well. 

In \cite{CDG1} and  \cite{CDG2} we presented large families of determinantal ideals that are CS or CS$^*$, and discussed how these results generalize classical results of Bernstein, Sturmfels, and Zelevinsky \cite{SZ, BZ} on universal Gr\"obner bases for generic maximal minors. In \cite{CDG3}  we gave a combinatorial description of their multigraded generic initial ideals. 

The goal of this paper is showing that the following three families of ideals, that have recently attracted the attention of  several researchers, are CS and, in some cases, also CS$^*$.  

\begin{enumerate}
\item  {\bf Binomial edge ideals.}  Binomial edge ideals have been introduced and studied by Herzog, Hibi, Hreinsdottir, Kahle, and Rauh~\cite{HHHKR} and independently by Ohtani \cite{O}, who proved that they are always radical. Matsuda and Murai  \cite{MM} proved that the regularity of the binomial edge ideal associated to a graph $G$ is bounded by the number of vertices of the graph and conjectured that equality holds only when the graph is a line. Other interesting results concerning  binomial edge ideals can be found, for example, in \cite{EZ, EHHQ,KM}.  We prove that binomial edge ideals  are CS and describe the associated generic initial ideal. As an immediate corollary we obtain the aforementioned results of  \cite{HHHKR, MM, O}. \medskip  

\item  {\bf Closure of linear spaces in products of projective spaces.}  Let $V$ be a vector space of linear forms in $K[x_1,\dots, x_n]$  and let $L\subset \AAA_K^n$ be the zero locus of $V$, i.e. a linear space containing the origin. In \cite{AB} Ardila and Boocher studied the ideal $I(\tilde{L})$ defining the closure $\tilde{L}$ of $L$  in $(\PP^1)^n$. They established several interesting structural properties of  $I(\tilde{L})$. 
We prove that $I(\tilde{L})$ is both a CS and a CS$^*$ ideal. As a consequence we recover some of the results of \cite{AB}. More generally we prove that the ideal $I_a(\tilde{L})$  defining the closure  of $L$ in  the  product $\PP^{a_1}\times \dots \times \PP^{a_u}$ where $a=(a_1,\dots, a_u)$ and $\sum a_i=n$ is CS (but not CS$^*$ in general). We describe $I_a(\tilde{L})$ first as the saturation with respect to the set of homogenizing variables of a single determinantal ideal and then as a sum of several determinantal ideals. Furthermore we give a combinatorial characterization of  its multidegree, or equivalently, a description of the multigraded generic initial ideal of  $I_a(\tilde{L})$. Finally, we observe that $I_a(\tilde{L})$ defines a Cohen-Macaulay normal domain. This follows from Brion's Theorem, as we discuss in Section \ref{S1}. \medskip   

\item  {\bf Multiview ideals.} A collection of matrices $A=\{A_i\}_{i=1,\dots,m}$ with scalar entries, with $A_i$ of size $d_i\times n$ and $\rank A_i=d_i$, induces a rational map
$$\phi_A: \PP^{n-1}  \dashrightarrow   \prod \PP^{d_i-1}$$ sending $x\in \PP^{n-1}$ to $(A_ix)_{i=1,\dots, m}$. 
The ideal $J_A$ of the closure of the image of $\phi_A$ is called multiview ideal. As explained in \cite{AST}, the ideal  $J_A$  plays an important role in various aspects of geometrical computer vision. In \cite{AST} it is proved that $J_A$ is a CS ideal when $n=4$, $d_i=3$ for all $i$, and assuming that the $A_i$'s are generic. In \cite{Binglin} Binglin Li proved results that, suitably interpreted,  imply that   $J_A$ is a CS ideal in all cases. We show that the same conclusion can be obtained as a simple corollary of our results on CS ideals in two ways:  via elimination from the fact that the ideal of $2$-minors of a multigraded matrix is CS  and, again,  via elimination from the multigraded closure of  a linear space  as discussed in (2). 
\end{enumerate} 

Notice that the results in (2) and (3) also answer some of the questions posed by Ardila and Boocher \cite[pg. 234]{AB}. 

A result of Brion \cite{Brion} asserts that a prime ideal with multiplicity-free multidegree defines a Cohen-Macaulay normal domain and is a CS ideal. As a corollary of Brion's Theorem, in Section \ref{S1} we show that the minimal primes of a CS ideal are CS as well.
As a further consequence we get that, if $P$ is a prime CS ideal, then every ideal with the same multigraded Hilbert function of $P$ is Cohen Macaulay. We propose two conjectures (Conjecture \ref{conjB} and Conjecture \ref{conjH}) concerning extremal Betti numbers and the Hilbert function of local cohomology modules of CS ideals, that are inspired by Brion's Theorem and confirmed by extensive computations. We prove that both conjectures hold for CS monomial ideals. 
 
Our results have been suggested and confirmed by extensive computations performed using CoCoA \cite{CoCoA} and Macaulay2 \cite{M2}. 
We thank Michel Brion, Marc Chardin, Kohji Yanagawa, and Matteo Varbaro for useful discussions and suggestions.

\section{Multidegrees and CS ideals}\label{S1}

Let $K$ be a field and $S=K[ x_{ij} : i=1,\dots, n, 1\leq j\leq d_i]$ with the standard  $\ZZ^n$-graded structure induced by $\deg(x_{ij})=e_i\in\ZZ^n$.
We assume that $d_1,\dots, d_n>0$ and set $x_i=x_{i,1}$.  Let $T=K[x_{1},x_{2},\dots, x_{n}]\subset S$ with the induced standard  $\ZZ^n$-graded structure. For $m\in \NN$ we set $[m]=\{1,\dots,m\}$ and $[m]_0=\{0,\dots,m\}$. 
A prime ideal $P$ of $S$ is called relevant if $P\not\supset S_{(1,\dots,1)}$ and irrelevant otherwise. 

For a $\ZZ^n$-graded $S$-module $M$, denote by $M_a$ the homogeneous component of $M$ of degree $a\in \ZZ^n$.  

\begin{defn}
Let $M$ be a finitely generated, $\ZZ^n$-graded $S$-module and set  
$c=\dim S-\dim M$.  The {\em $\ZZ^n$-graded Hilbert series} of $M$ is
$$\HS(M,z)=\sum_{a\in \ZZ^n} (\dim_K M_a)  z^a\in\QQ[[z_1,\dots,z_n]][z_1^{-1},\dots, z_n^{-1}].$$ 
\end{defn} 
Set 
$$K_M(z)= \HS(M,z)\prod_{i=1}^n (1-z_i)^{d_i}.$$
It turns out that $K_M(z) \in \ZZ[z_1^{\pm 1} ,\dots,z_n^{\pm 1}]$. Then set 
$$C_M(z)=K_M(1-z_1,\dots,1-z_n).$$ 
The multidegree $\MDeg_M(z)$ of $M$, as defined in \cite[Chapter 8]{MS}, is the homogeneous component of smallest total degree, i.e. of degree $c$,  of $C_M(z)$. It turns out that $\MDeg_M(z)\in \NN[z_1,\dots,z_n]$. Note that by \cite[Claim 8.54]{MS} the multidegree $\MDeg_M(z)$ does not change if one replaces $M$ by a shifted copy of it. Hence it is not restrictive to assume that $M_a=0$  unless $a\in \NN^n$ and, under this assumption, $K_M(z)$ and $C_M(z)$ are actually polynomials. 
In the geometric setting multidegrees are related to Chow classes, see \cite[Notes pg. 172]{MS} for details and references. 

For a Laurent polynomial $G(z)\in \RR[z_1^{\pm 1} ,\dots,z_n^{\pm 1}]$ let us denote by $[G(z)]_{\min}$ the sum of the terms, including their coefficients, that are minimal with respect to division in the support of $G(z)$. For example, if $G(z)=z_1^2+2z_1z_2^2+3z_1^3$, then $[G(z)]_{\min}=z_1^2+2z_1z_2^2$. One easily checks that: 
\begin{lemma}\label{minpart} 
Let $G_1(z),\dots, G_v(z)$ be Laurent polynomials such that $[G_i(z)]_{\min}$ has positive coefficients for every $i$. Then 
$$\left [\sum_{i=1}^v G_i(z)\right ]_{\min}= \left [\sum_{i=1}^v [ G_i(z)]_{\min} \right ]_{\min}.$$
\end{lemma}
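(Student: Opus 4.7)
The plan is to unwind the definition of $[\cdot]_{\min}$ in support-theoretic terms. Write $S_i = \Supp(G_i)$ and $M_i = \Supp([G_i]_{\min})$, so $M_i$ is the set of minimal elements of $S_i$ under componentwise divisibility. Because the coefficients in $[G_i]_{\min}$ are positive by hypothesis, no cancellation can occur when we add the $[G_i]_{\min}$, so $\Supp\bigl(\sum_i [G_i]_{\min}\bigr) = \bigcup_i M_i$. The whole lemma then reduces to two symmetric comparisons between the minimal elements of $\Supp(\sum_i G_i)$ and those of $\bigcup_i M_i$, together with a coefficient check.

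The first key observation I would record is a no-cancellation statement: for every $a$ that is a minimal element of $\bigcup_i S_i$ and that lies in some $M_j$, the coefficient of $z^a$ in $\sum_i G_i$ is strictly positive. Indeed, if $a \in S_i$ and $a$ is minimal in $\bigcup_j S_j \supseteq S_i$, then $a$ is minimal in $S_i$, so $a \in M_i$ and its coefficient in $G_i$ equals its (positive) coefficient in $[G_i]_{\min}$; for $i$ with $a \notin S_i$ the contribution is $0$. Hence the sum is a sum of positive numbers, and moreover equals the coefficient of $z^a$ in $\sum_i [G_i]_{\min}$.

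Next I would show the two inclusions. For the inclusion $\supseteq$, take $a$ minimal in $\bigcup_i M_i$. A routine check shows $a$ is then minimal in $\bigcup_i S_i$ (if $b \in S_j$ with $b \le a$ and $b \ne a$, pick $b' \in M_j$ with $b' \le b$; then $b' < a$ and $b' \in \bigcup_i M_i$, contradiction). By the previous paragraph $a$ survives with positive coefficient in $\sum_i G_i$, and since $\Supp(\sum_i G_i) \subseteq \bigcup_i S_i$, it is automatically minimal in $\Supp(\sum_i G_i)$, with matching coefficient. For the inclusion $\subseteq$, take $a$ minimal in $\Supp(\sum_i G_i)$. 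I would argue by contradiction that $a$ must be minimal in $\bigcup_i S_i$: otherwise some $b \in \bigcup_i S_i$ satisfies $b < a$, and choosing any minimal element $b'$ of $\bigcup_i S_i$ below $b$ places $b' \in \bigcup_i M_i$, which by the no-cancellation statement gives $b'$ a positive coefficient in $\sum_i G_i$, contradicting minimality of $a$. Once $a$ is minimal in $\bigcup_i S_i$ and lies in some $S_i$, the same argument as before puts $a$ in $M_i \subseteq \bigcup_i M_i$ and shows it is minimal there.

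Putting these together, the minimal supports coincide and the coefficients agree term by term, which is exactly the asserted equality. The only place requiring genuine care is the no-cancellation step; everything else is formal manipulation of the divisibility partial order on monomials, so I do not expect any real obstacle beyond keeping the two directions of the support comparison straight.
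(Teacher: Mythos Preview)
Your proof is correct. The paper itself does not supply a proof of this lemma: it introduces the notation $[\cdot]_{\min}$, gives an example, and then states the lemma with the phrase ``One easily checks that,'' leaving the verification to the reader. Your argument is a careful expansion of that omitted verification: reducing the equality to a comparison of minimal supports under the componentwise order, isolating the one substantive point (no cancellation at exponents minimal in $\bigcup_i S_i$, thanks to the positivity hypothesis), and then handling the two inclusions formally. There is nothing to compare approach-wise, since the paper provides none; your write-up is exactly the kind of check the authors had in mind.
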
 

In \cite{CDG1} we defined the G-multidegree of $M$ as follows: 
$$\GDeg_M(z)= [ C_M(z) ]_{\min}.$$ 
Clearly the homogeneous component of degree  $c$ of $\GDeg_M(z)$ is $\MDeg_M(z)$. In Proposition~\ref{flachi1} we will show that  $\GDeg_M(z)=\MDeg_M(z)$ if all the minimal primes of $M$ have codimension $c$. On the other hand, if $M$ has minimal primes of codimension greater than $c$, then $\GDeg_M(z)$ might contain terms of degree higher than $c$. 

By definition $$\MDeg_M(z)=\sum  e_M(a) z^a$$ where  the sum runs over all $a\in \prod_{i=1}^n [d_i]_0$ such that $|a|=c$. It turns out that $e_M(a)\in \NN$. The module $M$ has a multiplicity-free multidegree if $e_M(a)\in \{0,1\}$ for all $a$. Furthermore $M$ has a multiplicity-free G-multidegree if all the non-zero coefficients in $\GDeg_M(z)$ are equal to $1$.  With a slight abuse of terminology we will say that a multigraded ideal $I$ has  multiplicity-free multidegree (or multiplicity-free G-multidegree) if the quotient ring $S/I$ has that property. 

Assuming that  $M_a\neq 0$ for $a\gg 0$ (that is, when all the components of $a$ are sufficiently large), there exists a non-zero polynomial $P_M(z)\in \QQ[z_1,\dots, z_n]$, the multigraded Hilbert  polynomial of $M$, such that $P_M(a)=\dim_K M_a$ for $a\gg 0$. Denote by $D_M(z)$ the homogeneous component of largest degree of $P_M(z)$. If $M$ has  irrelevant minimal primes then there is no clear relation between $D_M(z)$ and $\MDeg_M(z)$. On the other hand if one assumes that $M$ has at least one relevant minimal prime of minimal codimension, then the total degree of $D_M(z)$ is $\dim M-n$ and the coefficients $D_M(z)$ can be deduced from those of $\MDeg_M(z)$.  
Let 
$$D_M(z)=\sum  \frac{f_M(a)}{a!} z^a$$ 
where the sum runs over the $a\in \prod_{i=1}^n [d_i-1]_0$ such that $|a|=\dim M-n$. The numbers $f_M(a)$ are actually non-negative integers and are called mixed multiplicities of $M$. 

The polynomials $D_M(z)$ and $\MDeg_M(z)$ are related as follows: for all the $a\in \prod_{i=1}^n [d_i-1]_0$ such that $|a|=\dim M-n$ one has $f_M(a)=e_M(a')$, where $a'=(d_1-1-a_1, \dots,  d_n-1-a_n)$. 
If  $a\in \NN^n$ is such that  $|a|=c$ and  $a_i=d_i$ for some $i$, then the corresponding coefficient $e_M(a)$  cannot be read off $D_M(z)$.  However, if all the minimal primes of minimal codimension of $M$ are relevant, then such coefficients are actually  zero. Therefore one has: 

\begin{lemma} 
Assume that all the minimal primes of minimal codimension of $M$ are relevant.
Then the polynomials $\MDeg_M(z)$ and $D_M(z)$ are two different encodings of the same numerical data. In particular, this is the case if $M=S/P$ and $P$ is a relevant prime. 
\end{lemma}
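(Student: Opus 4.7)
The plan is to reduce the lemma to the vanishing claim: under the hypothesis, $e_M(a)=0$ whenever $|a|=c$ and some $a_i=d_i$. Combined with the identity $f_M(a)=e_M(a')$ on $\prod_i[d_i-1]_0$ recorded before the lemma, this vanishing says that the nonzero coefficients of $\MDeg_M(z)$ are in bijection with those of $D_M(z)$ via $a\leftrightarrow a'=(d_1-1-a_1,\dots,d_n-1-a_n)$, which is the asserted equivalence of encodings.

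First I would invoke the additivity of the multidegree (\cite[Theorem~8.53]{MS}) to write
$$\MDeg_M(z)=\sum_{P}\length_{S_P}(M_P)\cdot\MDeg_{S/P}(z),$$
the sum running over the minimal primes $P$ of $M$ of minimum codimension $c$. All summands have non-negative coefficients, and by hypothesis each such $P$ is relevant; so it suffices to prove the vanishing claim for $M=S/P$ with $P$ a relevant prime of codimension $c$. For such a $P$ I would identify $\MDeg_{S/P}(z)$ with the class of (the projective closure of) $V(P)$ in the Chow ring of the multiprojective space $\prod_{i=1}^n\PP^{d_i-1}$, namely $\ZZ[h_1,\dots,h_n]/(h_1^{d_1},\dots,h_n^{d_n})$: under the identification $z_i\leftrightarrow h_i$, the relations $h_i^{d_i}=0$ force the coefficient of any $z^a$ with $a_i\geq d_i$ to vanish. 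The \emph{in particular} clause is then immediate, since when $M=S/P$ with $P$ a relevant prime, $P$ itself is the unique minimal prime of $M$, of minimum codimension $c=\hgt P$, so the first part of the lemma applies.

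The main obstacle is the Chow-ring identification of $\MDeg_{S/P}$ with the class of $V(P)$ for an arbitrary relevant prime $P$; this is a folklore but nontrivial fact, and I would simply cite \cite[Chapter~8]{MS}. A more elementary alternative would be to expand
$$[w^a]C_{S/P}(w)=(-1)^{|a|}\sum_b k_b\prod_i\binom{b_i}{a_i}$$
in terms of the coefficients $k_b$ of $K_{S/P}(z)$ and show directly that this sum vanishes whenever some $a_i=d_i$; this boils down to a careful analysis of the pole structure of $\HS(S/P,z)$ at $z=(1,\dots,1)$ imposed by the relevance of $P$, but I expect the Chow-ring route to give a substantially cleaner argument.
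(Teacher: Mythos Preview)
Your proposal is correct and matches the paper's reasoning exactly: the lemma is presented in the paper without a standalone proof, as an immediate consequence of the two facts recorded in the paragraph just before it, namely the relation $f_M(a)=e_M(a')$ on $\prod_i[d_i-1]_0$ and the vanishing of $e_M(a)$ when some $a_i=d_i$ under the relevance hypothesis. You identify the same reduction and in fact go further than the paper by sketching a justification of the vanishing (additivity from \cite[Theorem~8.53]{MS} plus the Chow-class interpretation), whereas the paper simply asserts it.

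One small caution on the Chow-ring step, which you already flag as the main obstacle: the identification of $\MDeg_{S/P}(z)$ with the class $[V(P)]$ in $A^*(\prod_i\PP^{d_i-1})=\ZZ[h_1,\dots,h_n]/(h_1^{d_1},\dots,h_n^{d_n})$ is, a priori, an equality \emph{modulo} the relations $h_i^{d_i}=0$; this alone does not force the coefficients with $a_i=d_i$ in the \emph{polynomial} $\MDeg_{S/P}(z)$ to vanish. What is needed is the slightly stronger statement that $\MDeg_{S/P}(z)$ is already the distinguished polynomial representative with all exponents $<d_i$. A clean elementary route, close to the alternative you mention: since $P$ is relevant one has $P\not\supset S_{e_i}$ for each $i$, so there is $\ell_i\in S_{e_i}\setminus P$; passing to $\bar S=S/(\ell_i)$ (a polynomial ring with only $d_i-1$ variables of degree $e_i$) one checks that $K^{\bar S}_{S/(P+\ell_i)}(z)=K^S_{S/P}(z)$, hence $\MDeg^{\bar S}_{S/(P+\ell_i)}(z)=\MDeg^S_{S/P}(z)$, and the left-hand side has support with $a_i\le d_i-1$ by the general support bound for $\bar S$-modules.
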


If $K$ is algebraically closed and $P$ is a relevant prime ideal then the coefficients $e_{S/P}(a)$ have a geometric interpretation. Let $X$ denote the associated subvariety of $\PP^{d_1-1}\times \dots \times \PP^{d_n-1}$. The coefficient $e_{S/P}(a)$ is the number of points of intersection of $X$ with $L_1\times \cdots \times L_n$ where $L_i$ is a general linear space of $\PP^{d_i-1}$ of dimension $a_i$. 

Given a term order $\tau$ and a $\ZZ^n$-graded homogeneous ideal $I$ of $S$, one can consider its {\em $\ZZ^n$-graded generic initial ideal} $\gin(I)$. As in the $\ZZ$-graded setting, $\ZZ^n$-graded generic initial ideals are Borel fixed. We refer to \cite[Section 1]{CDG2} for more details on multigraded generic initial ideals.  
We just recall that to any $a\in \prod_{i=1}^n [d_i]_0$ one can associate the Borel fixed prime ideal 
$$P_a=( x_{ij} : 1\leq i \leq n \mbox{ and } 1\leq j\leq a_i )$$
 and that any Borel fixed prime  ideal of $S$ is of this form. 

We have:

\begin{lemma}\label{supportgin} 
Let $I$ be a $\ZZ^n$-graded ideal of $S$ and let $\{z^{a_1},\dots, z^{a_s} \}$ be the support of $\GDeg_{S/I}(z)$. Then the minimal primes of the $\ZZ^n$-graded generic initial ideal of $I$ are $\{P_{a_1},\dots,P_{a_s}\}$. 
\end{lemma}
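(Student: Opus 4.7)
The plan is to reduce to the Borel-fixed case and then analyse $C_{S/I}(z)$ through a prime filtration, using Lemma~\ref{minpart} to extract its minimal part. Since $\gin(I)$ has the same $\ZZ^n$-graded Hilbert series as $I$, the polynomials $K_M(z)$, $C_M(z)$ and $\GDeg_M(z)$ agree for $M=S/I$ and $M=S/\gin(I)$; in particular the minimal primes to be described are those of $\gin(I)$. So I replace $I$ by $\gin(I)$ and assume that $I$ is Borel-fixed, and in particular monomial.

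I next observe that the minimal primes of the Borel-fixed monomial ideal $I$ are themselves Borel-fixed, hence of the form $P_a$. Indeed, the (finite) set of minimal primes of $I$ is permuted by the connected Borel group action on $S$, so each one is individually Borel-stable; among monomial primes of $S$, the Borel-stable ones are precisely the $P_a$. Write the minimal primes of $I$ as $P_{a_1},\dots,P_{a_s}$.

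I then fix a $\ZZ^n$-graded prime filtration $0=M_0\subset M_1\subset\cdots\subset M_t=S/I$ with $M_j/M_{j-1}\cong (S/Q_j)(-b_j)$, where each $Q_j$ is a monomial prime containing $I$ and $b_j\in\ZZ^n$. For a monomial prime $Q$, set $\alpha_i(Q)=\#\{j : x_{ij}\in Q\}$; a direct Hilbert series computation gives
\[
C_{S/Q(-b)}(z)=\prod_{i}(1-z_i)^{b_i}\cdot z^{\alpha(Q)},
\]
whose $[\,\cdot\,]_{\min}$ equals $z^{\alpha(Q)}$ with coefficient $1$. By additivity of the Hilbert series along the filtration and by Lemma~\ref{minpart}, this will yield
\[
\GDeg_{S/I}(z)=\Bigl[\,\sum_{j} z^{\alpha(Q_j)}\,\Bigr]_{\min}.
\]

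The final and most delicate step is to identify the componentwise-minimal monomials of $\sum_{j} z^{\alpha(Q_j)}$ with the $z^{a_k}$. On one hand, each $P_{a_k}$ is associated to $I$, hence equal to some $Q_j$, contributing $z^{a_k}$ to the sum. On the other hand, since $\sqrt{I}=\bigcap_{k} P_{a_k}$ with each $P_{a_k}$ Borel-fixed, every $Q_j\supset I$ must contain some $P_{a_k}$; this forces $x_{i,1},\dots,x_{i,a_{k,i}}\in Q_j$ for every $i$, and hence $\alpha(Q_j)\ge a_k$ componentwise. Combining these two observations, the componentwise-minimal elements of $\{\alpha(Q_j)\}$ are precisely the $a_k$, giving the claimed description of the support of $\GDeg_{S/I}(z)$. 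The main obstacle is exactly this last identification: it requires using both the Borel-fixedness of the minimal primes of $I$ (so that $\sqrt{I}$ decomposes as an intersection of $P_a$'s) and the explicit shape of the $P_a$ to translate the componentwise order on the vectors $\alpha(Q_j)$ into the containment order among the minimal primes of $I$.
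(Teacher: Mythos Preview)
Your proof is correct, but it takes a different route from the paper's. The paper's own proof of Lemma~\ref{supportgin} is a two-line citation: after noting $\GDeg_{S/I}=\GDeg_{S/\gin(I)}$, it invokes \cite[Prop.~3.12]{CDG1} as a black box. What you have written is essentially a direct proof of that cited result, specialised to the Borel-fixed monomial situation. In fact, your argument via a prime filtration and Lemma~\ref{minpart} anticipates almost verbatim the proof of Proposition~\ref{flachi1}, which appears immediately after this lemma in the paper; the difference is that Proposition~\ref{flachi1} handles arbitrary $\ZZ^n$-graded primes and therefore needs the Kalkbrener--Sturmfels theorem \cite{KS} to compare $\MDeg_{S/P_1}$ and $\MDeg_{S/P_2}$ for $P_1\subsetneq P_2$, whereas you sidestep this entirely by working with monomial primes and the explicit shape of the $P_a$, reducing the comparison to componentwise inequality of the vectors $\alpha(Q_j)$. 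Your approach thus buys self-containedness at the cost of redoing work the authors had already packaged elsewhere. One small point worth making explicit: to get a filtration with \emph{monomial} primes $Q_j$ and shifts $b_j\in\NN^n$, you should use the finer $\ZZ^N$-grading (one degree per variable) available on the monomial ideal $\gin(I)$ and then coarsen; a generic $\ZZ^n$-graded filtration need not have monomial prime quotients.
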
 

\begin{proof} Let $J=\gin(I)$. Since the G-multidegree only depends on the Hilbert series we have $\GDeg_{S/I}(z)=\GDeg_{S/J}(z)$. Then the result follows from \cite[Prop. 3.12]{CDG1}. 
\end{proof}

\begin{defn}\label{defCS}
Let $I$ be a $\ZZ^n$-graded ideal of $S$. We say that $I$ is a  Cartwright-Sturmfels (CS) ideal if there exists a radical Borel fixed ideal $J$ of $S$ such that $\HS(I,y)=\HS(J,y)$. 
\end{defn} 

With the notation of Definition~\ref{defCS} it turns out that $J=\gin(I)$, see \cite[Proposition 1.6]{CDG2}. Notice also that by Lemma~\ref{supportgin} one has the following characterization. 

\begin{prop}\label{charaCS}
Let $I$ be a $\ZZ^n$-graded ideal of $S$.  One has that $I$ is CS if and only if $I$ has a multiplicity-free G-multidegree and $\gin(I)$ has no embedded primes. 
\end{prop}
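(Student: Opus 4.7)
The plan is to reduce the proposition to a characterization of radical Borel-fixed ideals. By Definition~\ref{defCS} together with \cite[Proposition~1.6]{CDG2}, $I$ is CS if and only if $J:=\gin(I)$ is radical; since $\GDeg$ depends only on the Hilbert series we also have $\GDeg_{S/I}(z)=\GDeg_{S/J}(z)$. Thus the whole statement amounts to: a Borel-fixed ideal $J$ is radical if and only if it has no embedded primes and $\GDeg_{S/J}(z)$ is multiplicity-free.

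For the forward direction I would write a radical Borel-fixed $J=\bigcap_{i=1}^s P_{a_i}$, apply Mayer--Vietoris inclusion--exclusion to its Hilbert series, and use the elementary computation $K_{S/P_a}(z)=\prod_k(1-z_k)^{a_k}$, hence $C_{S/P_a}(z)=z^a$, to obtain
\[
C_{S/J}(z)\;=\;\sum_{i=1}^s z^{a_i}\;+\;\sum_{|T|\geq 2}(-1)^{|T|+1}z^{b_T},
\]
where $b_T\in\NN^n$ has $k$th coordinate $\max_{i\in T}(a_i)_k$. Because the $a_i$ are minimal primes and hence form an antichain in $\NN^n$, every $z^{b_T}$ with $|T|\geq 2$ is strictly divisible by $z^{a_j}$ for any $j\in T$, so after cancellations no surviving term of the second sum is minimal. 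Lemma~\ref{minpart} then gives $\GDeg_{S/J}(z)=\sum_{i=1}^s z^{a_i}$, which is multiplicity-free, and the absence of embedded primes is automatic.

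For the backward direction, assume $J=\gin(I)$ has no embedded primes and $\GDeg_{S/J}(z)$ is multiplicity-free with support $\{z^{a_1},\dots,z^{a_s}\}$. By Lemma~\ref{supportgin} the minimal primes of $J$ are $\{P_{a_1},\dots,P_{a_s}\}$, and as a monomial ideal without embedded primes $J$ admits a primary decomposition $J=\bigcap_{i=1}^s Q_i$ with $Q_i$ a monomial $P_{a_i}$-primary ideal. The key input is the refinement (essentially \cite[Proposition~3.12]{CDG1}, the same result underlying Lemma~\ref{supportgin}) asserting that the coefficient of $z^{a_i}$ in $\GDeg_{S/J}(z)$ equals the multiplicity $m_i:=\length\bigl((S/Q_i)_{P_{a_i}}\bigr)$. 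Multiplicity-freeness then forces $m_i=1$ for each $i$, i.e.\ $(Q_i)_{P_{a_i}}=(P_{a_i})_{P_{a_i}}$; since both $Q_i$ and $P_{a_i}$ are monomial ideals generated in the variables of $P_{a_i}$, this localisation-equality forces $Q_i=P_{a_i}$. Hence $J=\bigcap_{i=1}^s P_{a_i}$ is radical and $I$ is CS.

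The main obstacle is the refined reading of the coefficients of $\GDeg_{S/J}(z)$ as the primary-component multiplicities $m_i$, which is needed only in the backward direction. It extends the forward inclusion--exclusion from the primes $P_{a_i}$ to their primary thickenings $Q_i$ and requires controlling the ideals $\sum_{i\in T}Q_i$, which are no longer prime; once this is granted from \cite[Proposition~3.12]{CDG1}, the rest of the argument is the elementary observation that a monomial primary ideal of multiplicity one coincides with its radical.
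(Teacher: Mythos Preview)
Your argument is correct and follows essentially the same route as the paper, which simply records the proposition as an immediate consequence of Lemma~\ref{supportgin} (and hence of \cite[Proposition~3.12]{CDG1}) without writing out a proof; you have supplied the details that the paper omits, in particular the explicit inclusion--exclusion computation for the forward direction and the primary-decomposition argument for the converse.

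One small point: your appeal to Lemma~\ref{minpart} in the forward direction is not quite legitimate as stated, since the individual summands $(-1)^{|T|+1}z^{b_T}$ do not all have positive minimal part. However, you do not actually need that lemma there. The direct observation you already make---that each $z^{a_i}$ appears with coefficient~$1$ and cannot be cancelled (because the $a_i$ form an antichain, so no $b_T$ with $|T|\ge 2$ equals any $a_j$), while every other surviving monomial $z^{b_T}$ is strictly divisible by some $z^{a_i}$---already gives $[C_{S/J}(z)]_{\min}=\sum_{i=1}^s z^{a_i}$ directly from the definition of $\GDeg$. So simply drop the reference to Lemma~\ref{minpart} and the forward direction stands as written.
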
 

\begin{defn} \label{defCS+}
We say that $I$ is a Cartwright-Sturmfels$^*$ (CS$^*$) ideal if there exists a $\ZZ^n$-graded ideal $J$ of $S$ extended from $T$ such that $\HS(I,z)=\HS(J,z)$. 
\end{defn}

With the notation of Definition~\ref{defCS+} it turns out that $J=\gin(I)$ and that $I$ and $J$ have the same $\ZZ^n$-graded Betti numbers, see \cite[Proposition 1.9, Corollary 1.10]{CDG2}. 

Notice that a $\ZZ^n$-graded homogeneous ideal of $T$ is just a monomial ideal of $T$. Hence, a $\ZZ^n$-graded ideal of $S$ which is extended from $T$ is an ideal of $S$ generated by monomials in $x_{1},\ldots, x_{n}$.

We observe the following: 

\begin{prop}\label{flachi1}
 Let $M$ be a finitely generated $\ZZ^n$-graded $S$-module and set 
$$F_M(z)=\sum \length(M_P) \MDeg_{S/P}(z)$$
where the sum runs over the minimal primes $P$ of $M$. Then:
\begin{itemize} 
\item[(1)] $\GDeg_M(z)=[ F_M(z)]_{\min}$. 
\item[(2)] If all minimal primes of $M$ have the same codimension, then 
$$F_M(z)=\MDeg_M(z)=\GDeg_M(z).$$
\end{itemize} 
\end{prop}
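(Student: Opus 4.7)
The plan is to prove both parts by combining a prime filtration of $M$ with the additivity of Hilbert series and the lemmas already established. After shifting $M$ into $\NN^n$-degrees (which preserves $\MDeg$, $\GDeg$, and $F_M$), I would choose a prime filtration $0=M_0\subsetneq\cdots\subsetneq M_k=M$ with composition factors $M_i/M_{i-1}\cong (S/Q_i)(-b_i)$, for primes $Q_i\in\Supp M$ and shifts $b_i\in\NN^n$. The familiar fact that the multiplicity $|\{i:Q_i=P\}|$ equals $\length_{S_P}(M_P)$ for every minimal prime $P$ of $M$ will play a key role. By additivity of Hilbert series,
\[
C_M(z)=\sum_{i=1}^k (1-z)^{b_i}\, C_{S/Q_i}(z), \qquad (1-z)^b:=\prod_j(1-z_j)^{b_j}.
\]

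Next I would show that for each $i$, $[(1-z)^{b_i}C_{S/Q_i}(z)]_{\min}=\MDeg_{S/Q_i}(z)$, with positive coefficients. The minimum-total-degree part of the product is $\MDeg_{S/Q_i}(z)$—because $(1-z)^{b_i}$ has constant term $1$ and $C_{S/Q_i}(z)$ starts with $\MDeg_{S/Q_i}(z)$—and every other monomial in the support is divisible by some monomial of $\Supp\MDeg_{S/Q_i}$. The latter relies on Lemma \ref{supportgin} applied to $S/Q_i$ together with the equidimensionality of the minimal primes of $\gin(Q_i)$, a multigraded Kalkbrener--Sturmfels type statement which yields $\GDeg_{S/Q_i}=\MDeg_{S/Q_i}$ for the prime $Q_i$. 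Applying Lemma \ref{minpart},
\[
\GDeg_M(z)=[C_M(z)]_{\min}=\left[\sum_{i=1}^k \MDeg_{S/Q_i}(z)\right]_{\min}.
\]

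Finally, I would separate minimal and embedded primes in this sum. For each embedded prime $Q$ appearing in the filtration, pick a minimal prime $P$ of $M$ with $P\subsetneq Q$; then $\gin(P)\subset\gin(Q)$, and Lemma \ref{supportgin} implies that each monomial of $\Supp\MDeg_{S/Q}$ is divisible by some monomial of $\Supp\MDeg_{S/P}$. Since all coefficients are positive, the embedded terms drop out of the $[\cdot]_{\min}$. Collecting the remaining minimal-prime contributions with multiplicities $\length(M_P)$ yields $\GDeg_M(z)=[F_M(z)]_{\min}$, which is (1). Part (2) then follows from the associativity formula for multidegrees: when all minimal primes of $M$ share the same codimension $c$, one has $\MDeg_M=F_M$, and $F_M$ is homogeneous of total degree $c$; since distinct monomials of the same total degree form an antichain under divisibility, $[F_M]_{\min}=F_M$, so $\GDeg_M=\MDeg_M=F_M$ by (1).

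The main obstacle is the multigraded Kalkbrener--Sturmfels type equidimensionality of $\gin(Q)$ for prime $Q$; this is what lets one identify $\GDeg_{S/Q}$ with $\MDeg_{S/Q}$ and carry out the divisibility bookkeeping above. Once this is available, the rest is a straightforward chase through the prime filtration and Lemma \ref{minpart}.
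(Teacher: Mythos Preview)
Your proof is correct and follows essentially the same route as the paper's: prime filtration, additivity of $C$-polynomials, Lemma~\ref{minpart}, and the Kalkbrener--Sturmfels equidimensionality of $\gin(P)$ to identify $\GDeg_{S/P}$ with $\MDeg_{S/P}$ and to discard embedded contributions, with part~(2) coming from the associativity formula for multidegrees \cite[Theorem~8.53]{MS}. You are actually more explicit than the paper about why $[(1-z)^{b_i}C_{S/Q_i}]_{\min}=\MDeg_{S/Q_i}$, which the paper simply asserts; otherwise the two arguments match step for step.
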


\begin{proof} 
(1) Consider a multigraded composition series $0=M_0\subset M_1 \dots \subset M_v=M$ such that for every $i$ one has $M_i/M_{i-1}\simeq S/(P_i)(-u_i)$  where $P_i$ is multigraded prime and $u_i\in \ZZ^n$ is a shift. Set $N_i=M_i/M_{i-1}$.  Since $K$-polynomials are additive on short exact sequences, so are $C$-polynomials. Then 
$$C_M(z)=\sum_{i=1}^v C_{N_i}(z).$$ 
Since $[C_{N_i}]_{\min}=\MDeg_{S/P_i}(z)$ independently of the shift $u_i$, and since $\MDeg_{S/P_i}(z)$ has positive coefficients, by Lemma~\ref{minpart} we have 
$$\GDeg_M(z)= [C_M(z)]_{\min}=\left[ \sum_{i=1}^v\MDeg_{S/P_i}(z) \right]_{\min} $$
Observe that if $P_1\subsetneq P_2$ are multigraded prime ideals then 
$$[\MDeg_{S/P_1}(z)+\MDeg_{S/P_2}(z)]_{\min}=\MDeg_{S/P_1}(z)$$
as can be seen by observing that $\gin(P_1)\subset \gin(P_2)$ and using that, by the main result of \cite{KS}, all the minimal primes of 
 $\gin(P_1)$ have minimal codimension. Hence we may remove from $[ \sum_{i=1}^v\MDeg_{S/P_i}(z) ]_{\min}$ those summands  corresponding to primes that are not minimal. Furthermore, by localization, we know  that a given minimal prime of $M$ occurs in  the list $P_1\dots, P_v$ exactly $\length(M_P)$ times. So we obtain the desired formula. 
Assertion  (2) follows from (1) and \cite[Theorem 8.53]{MS}. 
\end{proof}

We may deduce the following important corollary: 

\begin{cor}\label{flachi2}
Let  $I$ be a CS ideal,  then  $$\GDeg_{S/I}(z)=\sum \MDeg_{S/P}(z)$$ where the sum runs over the minimal primes $P$ of $I$.
\end{cor}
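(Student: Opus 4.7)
The plan is to deduce the corollary directly from Proposition~\ref{flachi1} combined with the characterization of CS ideals as those with radical $\gin(I)$. First I would apply Proposition~\ref{flachi1}(1) to $M = S/I$ to obtain
\[
\GDeg_{S/I}(z) \;=\; \left[\sum_{P \in \Min(I)} \length((S/I)_P)\, \MDeg_{S/P}(z)\right]_{\min}.
\]
Since $I$ is CS, $\gin(I)$ is radical by Definition~\ref{defCS}, and this forces $I$ itself to be radical (the standard fact that radicality of the initial ideal passes to the ideal, mentioned in the introduction). Thus each length factor equals $1$, and the right-hand side becomes $[F_{S/I}(z)]_{\min}$ with $F_{S/I}(z) := \sum_{P \in \Min(I)} \MDeg_{S/P}(z)$.

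The key remaining step, which I expect to be the main obstacle, is to show that the $[\cdot]_{\min}$ operation is superfluous, i.e. that $F_{S/I}(z)$ is already supported on pairwise incomparable monomials. The plan is to decompose $I$ by codimension: for each integer $d$, let $I^{(d)} := \bigcap_{P \in \Min(I),\, \hgt P = d} P$ be the equidimensional radical component of codimension $d$. Applying Proposition~\ref{flachi1}(2) to each $S/I^{(d)}$ yields $F_{S/I^{(d)}}(z) = \MDeg_{S/I^{(d)}}(z)$, a homogeneous polynomial of degree $d$. Summing over $d$ gives $F_{S/I}(z) = \sum_d \MDeg_{S/I^{(d)}}(z)$, so the problem reduces to establishing the degree-by-degree identification
\[
\MDeg_{S/I^{(d)}}(z) \;=\; \sum_{\substack{1\leq i\leq s\\ |a_i|=d}} z^{a_i}
\]
for each $d$, where $P_{a_1},\ldots,P_{a_s}$ are the minimal primes of $\gin(I)$ supplied by Lemma~\ref{supportgin}. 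Summing over $d$ would then give $F_{S/I}(z) = \GDeg_{S/I}(z)$, which is the desired conclusion.

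The hard part of the matching consists in showing $\gin(I^{(d)}) = \bigcap_{|a_i|=d} P_{a_i}$, i.e. that the equidimensional decomposition of $I$ is compatible with that of its generic initial ideal. For this I would combine two ingredients. First, the Kalkbrener-Sturmfels theorem~\cite{KS} applied to each minimal prime $P$ of $I$: it shows that $\gin(P)$ is equidimensional of codimension $\hgt P$, so that (via Lemma~\ref{supportgin} and Proposition~\ref{flachi1}(2) applied to $S/P$) the support of $\MDeg_{S/P}(z)$ consists of monomials $z^b$ with $|b| = \hgt P$ and, thanks to $\gin(P)\supseteq\gin(I)$, each $b$ satisfies $b \geq a_i$ componentwise for some $i$. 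Second, the multiplicity-freeness of $\GDeg_{S/I}(z)$ (Proposition~\ref{charaCS}) should rigidify these inequalities into equalities $b = a_i$ with $|a_i| = \hgt P$, since an excess $b > a_i$ would produce a contribution of $z^b$ at degree strictly greater than $|a_i|$ that could not be accommodated within the multiplicity-free structure. Carrying this out degree by degree then collapses $F_{S/I}(z)$ onto $\GDeg_{S/I}(z)$, completing the proof.
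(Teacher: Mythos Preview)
Your setup through the first display is correct: Proposition~\ref{flachi1}(1) plus radicality of $I$ give $\GDeg_{S/I}(z)=[F_{S/I}(z)]_{\min}$ with $F_{S/I}(z)=\sum_{P\in\Min(I)}\MDeg_{S/P}(z)$, and the task is exactly to show that the $[\cdot]_{\min}$ is superfluous. Your use of Kalkbrener--Sturmfels to conclude that every monomial $z^{b}$ in the support of $\MDeg_{S/P}(z)$ satisfies $|b|=\hgt P$ and $b\geq a_i$ for some $i$ is also fine.

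The gap is in your last paragraph. You claim that multiplicity-freeness of $\GDeg_{S/I}(z)$ ``rigidifies'' each such inequality $b\geq a_i$ into an equality, because a strict excess ``could not be accommodated within the multiplicity-free structure.'' But multiplicity-freeness is a statement about the \emph{minimal} terms of $F_{S/I}(z)$ only: it says each $z^{a_i}$ appears in $[F_{S/I}(z)]_{\min}$ with coefficient~$1$. It imposes no constraint whatsoever on terms $z^{b}$ with $b>a_i$ strictly, since such terms are simply discarded by $[\cdot]_{\min}$. Concretely, nothing you have written rules out the possibility that some minimal prime $P$ of $I$ with $\hgt P=d$ contributes a term $z^{b}$ with $|b|=d$ and $b>a_i$ for some $a_i$ of strictly smaller total degree; such a $z^{b}$ would sit in $F_{S/I}(z)$ but vanish from $\GDeg_{S/I}(z)$, and no contradiction with multiplicity-freeness arises. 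The claimed identity $\gin(I^{(d)})=\bigcap_{|a_i|=d}P_{a_i}$ would indeed finish the argument, but you have not proved it, and Kalkbrener--Sturmfels (which applies to primes, not to equidimensional radicals) does not give it directly.

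The paper closes this gap by a completely different mechanism: it evaluates at $\one=(1,\dots,1)$. One has $\GDeg_{S/I}(\one)\leq F_{S/I}(\one)$ with equality iff $\GDeg_{S/I}(z)=F_{S/I}(z)$, since all coefficients are nonnegative. Then $F_{S/I}(\one)=\gdeg(S/I)$ because $I$ is radical, while $\GDeg_{S/I}(\one)=\GDeg_{S/J}(\one)=\gdeg(S/J)$ for $J=\gin(I)$ by \cite[Prop.~3.12]{CDG1}. Finally, results of Sturmfels--Trung--Vogel \cite{STV} on arithmetic degree under Gr\"obner deformation, combined with both $I$ and $J$ being radical, force $\gdeg(S/I)=\gdeg(S/J)$, collapsing the inequality. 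This numerical squeeze avoids any structural analysis of the individual $\MDeg_{S/P}(z)$ and is what your argument is missing.
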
 

\begin{proof}
Set  $\one=(1,1,\dots, 1)$ and let $F_{S/I}(z)$ be as in Proposition \ref{flachi1}. Since $I$ is radical,  we have that  $F_{S/I}(\one)$ is the geometric degree $\gdeg(S/I)$ in the sense of \cite{STV}. By Proposition~\ref{flachi1} we have $\GDeg_{S/I}(\one)\leq F_{S/I}(\one)$ and equality holds if and only if $\GDeg_{S/I}(z)=F_{S/I}(z)$. Now let $J=\gin(I)$. 
Observe that $\GDeg_{S/I}(z)=\GDeg_{S/J}(z)$ because $\GDeg$ just depends on the Hilbert series. Moreover $\GDeg_{S/J}(\one)=\gdeg(S/J)$ by \cite[Prop.3.12]{CDG1}. Both $I$ and $J$ are radical, hence their geometric degrees coincide with their  arithmetic degrees. Therefore, combining \cite[Prop. 4.1 ]{STV} and \cite[Thm.2.3]{STV} we have $\gdeg(S/I)= \gdeg(S/J)$. Summing up, we have:
$$\gdeg(S/J)=\GDeg_{S/J}(\one)=\GDeg_{S/I}(\one)\leq F_{S/I}(\one)=\gdeg(S/I)= \gdeg(S/J).$$
Hence $\GDeg_{S/I}(\one)=F_{S/I}(\one)$ which implies  $\GDeg_{S/I}(z)=F_{S/I}(z)$. 
\end{proof}

Since every  radical Borel fixed ideal has a multiplicity-free G-multidegree   it follows that the same is true for every CS ideal. Furthermore every CS ideal is generated in degree $\leq \one \in \ZZ^n$. However, there are radical ideals  generated in degree $\leq \one$ and with a multiplicity-free G-multidegree and that are not CS ideals, as the next example shows. 
 
\begin{ex} 
Let $S=\QQ[x_1,x_2,x_3,y_1,y_2,y_3]$ with the $\ZZ^2$-graded structure induced by $\deg(x_i)=e_1$ and $\deg(y_i)=e_2$. The ideal $I=(x_1y_1, x_2y_2, x_3y_2, x_2y_3, x_3y_3)$, generated in degree $\one=(1,1)$, is  the intersection of 
$(x_1, x_2, x_3), (y_1,x_2, x_3), (x_1, y_2, y_3), (y_1, y_2, y_3)$. Hence by Corollary \ref{flachi2} the multidegree of $S/I$ is 
$$\GDeg_{S/I}(z)=\MDeg_{S/I}(z)=z_1^3+z_1^2z_2+z_1z_2^2+z_2^3.$$
On the other hand, its multigraded generic initial ideal is 
$$\gin(I)=(x_1y_1, x_2y_1, x_1y_2, x_2y_2, x_3y_1, x_1x_2y_3, x_1^2y_3)$$
so that $I$ is not CS. 
\end{ex}  

The classes of $\CS$ and $\CS^*$ ideals are in a sense dual to each other. In fact, in~\cite[Theorem~1.14]{CDG2} we showed that if $I$ is a squarefree monomial ideal, then $I$ is CS if and only if its Alexander dual $I^*$ is CS$^*$. Moreover, it follows from the definitions that the families of CS and CS$^*$ ideals are closed under $\ZZ^n$-graded coordinate changes and taking initial ideals. In~\cite{CDG1,CDG2} we showed that if $I$ is CS or CS$^*$, then its $\ZZ^n$-graded generic initial ideal does not depend on the choice of the term order but only on the total order given to the indeterminates with the same degree. We also proved that each of the two classes is closed with respect to a number of natural operations, see~\cite[Proposition~1.7 and Theorem~1.16]{CDG2}. Other interesting properties, including bounds on the projective dimension and Castelnuovo-Mumford regularity were established in~\cite[Proposition~1.9, Proposition~1.12, and Corollary~1.15]{CDG2}.

A beautiful theorem of Brion \cite{Brion} asserts that an irreducible subvariety $X$ of a flag variety is normal and Cohen-Macaulay if it has multiplicity-free multidegree. Moreover Brion showed that such an $X$ admits a flat degeneration to a reduced union of Schubert varieties that is Cohen-Macaulay as well. See also the work of Perrin \cite{P}. Using the terminology that we have introduced and limiting ourselves to subvarieties of a product of projective spaces, Brion's result can be stated as follows: 

\begin{thm}[Brion]\label{Brion}
Assume $K$ is algebraically closed. Let $P$ be a $\ZZ^n$-graded prime ideal in the polynomial ring $S$. Assume that  $S/P$ has a multiplicity-free multidegree.  
Then: 
\begin{enumerate}
\item $S/P$ is normal and Cohen-Macaualy,
\item  $P$ is a CS ideal,  
\item  the multigraded gin of $P$ defines a Cohen-Macaulay ring.
\end{enumerate}
\end{thm}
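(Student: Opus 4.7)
The plan is to derive all three assertions directly from Brion's original theorem on multiplicity-free subvarieties of flag varieties \cite{Brion}, once that statement is translated into the multigraded polynomial-ring language used here.

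First I would set up the geometric dictionary. The multiprojective space $\PP^{d_1-1}\times\cdots\times\PP^{d_n-1}$ is a partial flag variety for $\GL_{d_1}\times\cdots\times\GL_{d_n}$; the product of standard Borel subgroups acts with Schubert subvarieties indexed by tuples $a\in\prod_{i=1}^n [d_i]_0$, and these are precisely the closed subschemes defined by the Borel-fixed primes $P_a$ recalled in the excerpt. Since $K$ is algebraically closed and $P$ is a $\ZZ^n$-graded prime, it cuts out an irreducible closed subvariety $X$ of this multiprojective space; under the identification of $\MDeg_{S/P}(z)$ with the Chow class of $X$ mentioned above, the multiplicity-free hypothesis on $\MDeg_{S/P}(z)$ is exactly the hypothesis of Brion's theorem.

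Brion's theorem then yields simultaneously: (a) $X$ is normal and Cohen-Macaulay, which is (1); and (b) $X$ admits a flat degeneration to a reduced Cohen-Macaulay union of Schubert varieties. In ring-theoretic terms, (b) provides a radical Borel-fixed ideal $J\subset S$ with $\HS(S/J,z)=\HS(S/P,z)$ and $S/J$ Cohen-Macaulay. By Definition~\ref{defCS} this immediately gives (2). By the uniqueness statement noted right after Definition~\ref{defCS}, namely \cite[Proposition~1.6]{CDG2}, such a $J$ must coincide with $\gin(P)$, whence the Cohen-Macaulayness of $S/J$ is exactly (3).

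The main obstacle is making the dictionary precise: that the Chow class used by Brion agrees with the $\MDeg$ as defined in the excerpt, that Schubert varieties for this specific partial flag variety correspond exactly to the schemes $V(P_a)$ (so that a reduced union of Schubert varieties is cut out by a radical Borel-fixed ideal of $S$), and that Brion's flat degeneration really produces a one-parameter flat family of $\ZZ^n$-graded ideals in $S$ (so that $J$ belongs to the polynomial ring and has the same multigraded Hilbert series as $P$). Once these compatibilities are in place, no further computation is required: (1), (2), and (3) follow by direct citation of \cite{Brion} together with \cite[Proposition~1.6]{CDG2}.
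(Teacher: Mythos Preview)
The paper does not supply a proof of Theorem~\ref{Brion}; it is stated as a direct translation of Brion's result \cite{Brion} into the language of multigraded polynomial rings, with the paragraph preceding the statement serving as the only justification. Your proposal makes that translation explicit and is exactly the argument the paper leaves implicit: identify the product of projective spaces as a flag variety, match the multidegree with the Chow class, and read off (1)--(3) from Brion's conclusions together with \cite[Proposition~1.6]{CDG2}.
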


As a consequence of Brion's Theorem we have: 

\begin{cor}  Assume that $K$ is algebraically closed.
\begin{itemize}
\item[(1)] Let $P$  be a prime CS ideal.  Then every $\ZZ^n$-graded  ideal with the same $\ZZ^n$-graded  Hilbert series of $P$ defines a Cohen-Macaulay ring. 
\item[(2)] Let $I$  be  CS ideal and let $P_1,\dots, P_s$ be its minimal primes.  Then each $P_i$ is CS and 
$$\inid(I)=\cap_{i=1}^s  \inid(P_i)$$
for every term ordering. 
\end{itemize}
\end{cor}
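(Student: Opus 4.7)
The plan is to handle the two parts separately, leaning on Brion's Theorem together with Corollary~\ref{flachi2}.

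For (1), if $I$ is a $\ZZ^n$-graded ideal with the same multigraded Hilbert series as the CS prime $P$, then Definition~\ref{defCS} applied with $J=\gin(P)$ (which is radical and Borel fixed because $P$ is CS) shows that $I$ is itself CS and that $\gin(I)=\gin(P)$. Brion's Theorem, part~(3), tells us that $S/\gin(P)$ is Cohen--Macaulay; since passing to the generic initial ideal preserves Krull dimension and can only increase projective dimension, the Auslander--Buchsbaum formula forces $\depth(S/I)=\dim(S/I)$, so $S/I$ is Cohen--Macaulay.

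For (2), the CS property of each $P_i$ is immediate: Corollary~\ref{flachi2} writes $\GDeg_{S/I}(z)=\sum_i \MDeg_{S/P_i}(z)$, and Proposition~\ref{charaCS} forces this to be multiplicity-free, so each $\MDeg_{S/P_i}(z)$ has coefficients in $\{0,1\}$ and Brion's Theorem, part~(2), delivers the conclusion.

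For the identity $\inid(I)=\bigcap_i \inid(P_i)$, the containment $\subseteq$ is clear from $I\subseteq P_i$. Both sides are radical (the class of CS ideals is closed under initial ideals, and intersections of radical ideals are radical), so it suffices to show that the two sides have the same set of minimal primes. The key observation, recorded inside the proof of Proposition~\ref{flachi1}, is: if $Q\subsetneq Q'$ are two multigraded prime ideals, then every term of $\MDeg_{S/Q'}(z)$ is a \emph{proper} multiple of some term of $\MDeg_{S/Q}(z)$. On the other hand, $\Supp(\GDeg_{S/I}(z))$ is by definition the set of divisibility-minimal elements of $\Supp(C_{S/I}(z))$, so no two of its elements divide each other strictly. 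Corollary~\ref{flachi2} applied to the CS ideals $\inid(I)$ and each $\inid(P_i)$, together with the identity $\sum_i \MDeg_{S/P_i}(z)=\GDeg_{S/I}(z)$, shows that the multidegree of every minimal prime of $\inid(I)$ or of any $\inid(P_i)$ is supported inside $\Supp(\GDeg_{S/I}(z))$. The divisibility obstruction then rules out (a) any strict containment between primes in $\bigcup_i \Min(\inid(P_i))$, so that $\Min(\bigcap_i \inid(P_i))=\bigcup_i \Min(\inid(P_i))$; and (b) any strict containment between such a prime $Q$ and a minimal prime of $\inid(I)$ contained in $Q$ (one exists because $\inid(I)\subseteq\inid(P_i)\subseteq Q$), which forces $\Min(\bigcap_i \inid(P_i))\subseteq \Min(\inid(I))$. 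Finally, comparing the $\GDeg$ expansions (each equals $\GDeg_{S/I}(z)$, with disjoint supports and strictly positive summands) upgrades this inclusion to an equality of minimal-prime sets, and radicality of both ideals concludes the proof.

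The main obstacle is that for an arbitrary term order the minimal primes of $\inid(I)$ and of the $\inid(P_i)$ need not be Borel fixed, so the Kalkbrener--Sturmfels argument used inside the proof of Proposition~\ref{flachi1} is not directly available. The divisibility-minimality of $\Supp(\GDeg_{S/I}(z))$ supplies the needed substitute, and with it in hand the rest of the argument is formal.
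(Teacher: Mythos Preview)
Your argument is correct. For part~(1) and for the first assertion of part~(2) (that each $P_i$ is CS) you follow exactly the paper's route: Brion's Theorem~\ref{Brion} applied after Corollary~\ref{flachi2} has guaranteed a multiplicity-free multidegree.

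Where you diverge is on the identity $\inid(I)=\bigcap_i\inid(P_i)$: the paper's proof simply does not address it --- the printed argument stops after ``hence it is a CS ideal by Theorem~\ref{Brion}''. Your antichain argument, based on the divisibility observation inside the proof of Proposition~\ref{flachi1} together with Corollary~\ref{flachi2} applied to the CS ideals $\inid(I)$ and $\inid(P_i)$, supplies a complete and correct justification. One step that could be made slightly more explicit: in your final comparison of the two $\GDeg$-expansions you implicitly use that the sets $\Min(\inid(P_i))$ are pairwise disjoint as $i$ varies. This follows because $\GDeg_{S/I}=\sum_i\MDeg_{S/P_i}$ is multiplicity-free, forcing the supports $\Supp(\MDeg_{S/P_i})$ to be pairwise disjoint, while each $\MDeg_{S/Q}$ for $Q\in\Min(\inid(P_i))$ is supported inside $\Supp(\MDeg_{S/P_i})$. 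Once that is noted, the two expansions of $\GDeg_{S/I}(z)$ indexed respectively by $\Min(\inid(I))$ and by $\bigcup_i\Min(\inid(P_i))$ force the inclusion of minimal-prime sets to be an equality, exactly as you say.
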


\begin{proof} 
Assertion (1) follows immediately from Theorem~\ref{Brion} since $P$, being CS, has a multiplicity-free multidegree.  To prove (2) one observes that  by Corollary~\ref{flachi2} a minimal prime of a CS ideal has a multiplicity-free multidegree, hence it is a CS ideal by Theorem~\ref{Brion}. 
\end{proof} 

Brion's Theorem suggests that there might be a very strong connection between homological invariants of a CS  ideal  and that of  its generic initial ideal.  Computational experiments suggest the following conjecture: 

\begin{conjecture}\label{conjB}
Let $I$ be a CS ideal and $J$ its $\ZZ^n$-graded generic initial ideal. Then  the extremal (total) Betti numbers of $I$ and $J$ are equal. In particular,  $I$ and $J$ have the same projective dimension and Castelnuovo-Mumford regularity. 
 \end{conjecture}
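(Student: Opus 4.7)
\emph{Proof plan.} I will focus on the monomial case, which the authors say they can settle, and sketch how Alexander duality together with the CS/CS$^*$ correspondence yields the conclusion. A comment on how to attack the general case follows at the end.

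First observe that a CS monomial ideal is automatically squarefree, since CS ideals are radical. By \cite[Theorem~1.14]{CDG2} the Alexander dual $I^*$ is a CS$^*$ ideal, so by \cite[Proposition~1.9, Corollary~1.10]{CDG2} the $\ZZ^n$-graded Betti table of $I^*$ coincides with that of $\gin(I^*)$. The first key step is to show that $\gin$ and Alexander duality commute on squarefree CS monomial ideals, i.e.
\[
\gin(I^*)\;=\;\gin(I)^{*}.
\]
For this I would exploit Lemma~\ref{supportgin}: the minimal primes of $\gin(I)$ are the $P_a$ for $z^a$ in the support of $\GDeg_{S/I}(z)$, and an analogous description applies to $\gin(I^*)$ in terms of its multidegree support. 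Since both $\gin(I^*)$ and $\gin(I)^{*}$ are squarefree Borel-fixed monomial ideals, they are determined by their minimal primes, and matching the two supports via the Alexander duality formula should give equality. I expect this comparison of supports to be the main technical obstacle; the combinatorial description of $\gin$ in \cite{CDG3} looks like the right tool to control both sides.

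The second step is a $\ZZ^n$-graded Eagon--Reiner-type argument: extremal $\ZZ^n$-graded Betti numbers of a squarefree monomial ideal are recovered from specific total Betti numbers of its Alexander dual sitting in positions maximal with respect to the natural componentwise order. Applying this to $I$ and to $\gin(I)$, using the identification $\gin(I)^{*}=\gin(I^*)$ from the previous step and the equality of all $\ZZ^n$-graded Betti numbers of $I^*$ and $\gin(I^*)$ (the CS$^*$ property), one transports equality of total Betti numbers on the dual side back to equality of extremal Betti numbers on the primal side. Equality of projective dimension and Castelnuovo--Mumford regularity then follows at once, since both invariants are detected by extremal Betti numbers at the top of the resolution.

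For the general (non-monomial) case, the natural strategy is a Gr\"obner deformation combined with the monomial case: flat-degenerate $I$ through a generic change of coordinates to $\gin(I)$, so that each step is CS (the class is closed under initial ideals, by \cite[Prop.~1.7]{CDG2}), and then use upper-semicontinuity of Betti numbers together with the fact that, at fixed Hilbert function, extremal Betti numbers cannot strictly increase under specialization once the monomial terminal case is under control. Bridging from the monomial conclusion to arbitrary CS ideals is the subtle part, and is plausibly why the authors leave the full conjecture open.
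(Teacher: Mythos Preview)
Your overall strategy---pass to the Alexander dual, invoke CS$^*$ rigidity, then translate back---matches the paper's route in the monomial case. However, your Step~1 contains a genuine error: the commutation $\gin(I^*)=\gin(I)^*$ is false, and both premises you use to argue for it fail. Take $S=K[x_{11},x_{12},x_{21},x_{22}]$ with the obvious $\ZZ^2$-grading and $I=(x_{11}x_{21},x_{11}x_{22})=(x_{11})\cap(x_{21},x_{22})$. This $I$ is radical and Borel-fixed, so $I=\gin(I)$ is CS. Its dual is $I^*=(x_{11},x_{21}x_{22})$, which is \emph{not} Borel-fixed (so $\gin(I)^*$ need not be Borel-fixed), and a direct computation gives $\gin(I^*)=(x_{11},x_{21}^2)$, which is \emph{not} squarefree (so $\gin(I^*)$ need not be squarefree) and is visibly different from $\gin(I)^*=I^*$. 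Thus the two ideals you want to identify are, in general, neither equal nor of the type you assume.

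The good news is that this step is unnecessary. The paper simply observes that $I^*$ and $J^*:=\gin(I)^*$ are \emph{both} CS$^*$ (apply \cite[Theorem~1.14]{CDG2} separately to the CS squarefree monomial ideals $I$ and $J=\gin(I)$) and have the same multigraded Hilbert series (since $I$ and $J$ do); CS$^*$ rigidity \cite[Proposition~1.9]{CDG2} then gives $\beta_{i,a}(I^*)=\beta_{i,a}(J^*)$ for all $i,a$ directly, with no commutation required. From there the paper does not use an Eagon--Reiner statement about extremal Betti numbers. Instead it proves the stronger Conjecture~\ref{conjH}: the equality of $\ZZ$-graded Betti tables of $I^*$ and $J^*$ feeds into Lemma~\ref{KohjiF} (Hochster's formulas) to yield $\dim_K H^i_\mm(S/I)_j=\dim_K H^i_\mm(S/J)_j$ for all $i$ and all $j\in\ZZ$, and this is upgraded to equality in every $a\in\ZZ^n$ via the semicontinuity inequality $\dim_K H^i_\mm(S/I)_a\le\dim_K H^i_\mm(S/J)_a$ (valid because $J$ is an initial ideal of $I$ after a coordinate change). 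Conjecture~\ref{conjB} then follows from the local-cohomological characterization of extremal Betti numbers. Your remarks on the non-monomial case are reasonable; the paper does not claim more there.
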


Even  stronger, we conjecture the following. 

\begin{conjecture}\label{conjH}
Let $I$ be a CS ideal and $J$ its $\ZZ^n$-graded generic initial ideal. Then one has: 
$$\dim_K H^i_{\mm}(S/I)_a=\dim_K H^i_{\mm}(S/J)_a$$
for every $i\in \NN$ and every $a\in \ZZ^n$. 
\end{conjecture}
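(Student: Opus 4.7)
The plan is to sandwich the desired equality between two opposite inequalities: a one-sided bound coming from the Gröbner degeneration $I\rightsquigarrow J$, and an additional constraint forced by the coincidence of $\ZZ^n$-graded Hilbert series together with Brion's theorem.

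First I would realize the passage from $I$ to $J=\gin(I)$ as a flat $K[t]$-family whose special fiber is $S/J$ and whose generic fiber is a generic $\GL$-translate of $S/I$. Upper semicontinuity of graded local cohomology in a flat family, a standard feature of Gröbner degenerations, then yields
$$\dim_K H^i_{\mm}(S/I)_a \;\leq\; \dim_K H^i_{\mm}(S/J)_a \quad \text{for every } i\in\NN,\ a\in \ZZ^n.$$
This is the easy direction and it does not use the CS hypothesis.

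The harder direction is to force equality. The key input is the corollary to Brion's theorem established just above: the minimal primes $P_1,\dots,P_s$ of the CS ideal $I$ are themselves CS, one has $\gin(I)=\bigcap_j \gin(P_j)$, and each $S/P_j$ (respectively $S/\gin(P_j)$) is Cohen-Macaulay of the same dimension with identical Hilbert series. Since two graded Cohen-Macaulay quotients with the same Hilbert series automatically have identical local cohomology (concentrated in top degree), the conjecture is free on the primary components. To propagate this to $I$ itself, I would set up the Mayer-Vietoris resolution
$$0 \to S/I \to \bigoplus_j S/P_j \to \bigoplus_{j<k} S/(P_j+P_k) \to \cdots$$
together with its analogue obtained by replacing each $P_j$ by $\gin(P_j)$, and compare the two induced spectral sequences converging to $H^{\ast}_{\mm}(S/I)$ and $H^{\ast}_{\mm}(S/J)$.

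The main obstacle will be matching the entries on the higher pages of the spectral sequences: the sums $\sum_{j\in T}P_j$ and $\sum_{j\in T}\gin(P_j)$ need not themselves be CS, nor need the latter agree with $\gin\bigl(\sum_{j\in T}P_j\bigr)$, so the Cohen-Macaulay argument does not apply directly to the higher terms. I would try to bypass this in two ways. One is to combine the inequality from Step 1 with the Euler-characteristic identity $\sum_i(-1)^i\HS(H^i_{\mm}(S/I),z) = \sum_i(-1)^i\HS(H^i_{\mm}(S/J),z)$, which follows from the equality of $\ZZ^n$-graded Hilbert series via the graded form of Serre's formula; termwise inequality and matching alternating sum leave no room, and equality is forced in every bidegree. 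The other is more structural: to look for a Brion-Knutson style compatible splitting that simultaneously degenerates $P_1,\dots,P_s$ to $\gin(P_1),\dots,\gin(P_s)$ together with all the partial sums, making the Mayer-Vietoris diagram deform flatly as a whole. I would calibrate both approaches against the monomial case, where the authors already provide a concrete model, and the CS$^\ast$ subcase, where Proposition~1.9 of \cite{CDG2} gives equality of all $\ZZ^n$-graded Betti numbers and hence, via local duality, the conjecture for free.
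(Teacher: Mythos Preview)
The statement you are attempting to prove is a \emph{conjecture} in the paper, not a theorem: the authors do not claim a proof in general, only in the monomial case (where they use Alexander duality to match the $\ZZ$-graded Betti numbers of $I^*$ and $J^*$, then translate via Hochster's formulas). So there is no ``paper's own proof'' to compare against for the general statement.

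That said, your proposal contains a genuine error that would sink the argument even as a heuristic. In your approach (a) you assert that the one-sided inequality
\[
\dim_K H^i_{\mm}(S/I)_a \;\leq\; \dim_K H^i_{\mm}(S/J)_a
\]
together with the equality of alternating sums
\[
\sum_i (-1)^i \dim_K H^i_{\mm}(S/I)_a \;=\; \sum_i (-1)^i \dim_K H^i_{\mm}(S/J)_a
\]
``leave no room, and equality is forced in every bidegree.'' This is false: if the sequence of differences $c_i = \dim_K H^i_{\mm}(S/J)_a - \dim_K H^i_{\mm}(S/I)_a$ is nonnegative with $\sum_i (-1)^i c_i = 0$, nothing prevents, say, $c_0=c_1=1$ and all other $c_i=0$. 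The alternating-sum constraint only forces $\sum_{i \text{ even}} c_i = \sum_{i \text{ odd}} c_i$, not that each $c_i$ vanish. This is precisely why the paper's proof of the monomial case goes through the \emph{non-alternating} identity of Lemma~\ref{KohjiF}: it upgrades the semicontinuity inequality to a $\ZZ$-graded equality $\dim_K H^i_{\mm}(S/I)_j = \dim_K H^i_{\mm}(S/J)_j$ for each $i$ separately, and only then does the termwise inequality in the finer $\ZZ^n$-grading force equality.

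Your approach (b) and the Mayer--Vietoris strategy are reasonable directions to explore, but as you yourself note, the sums $\sum_{j\in T} P_j$ are not known to be CS, and there is no available compatibility between $\gin$ and sums of primes in this setting. At present this is a sketch of where one might look, not a proof; the conjecture remains open outside the monomial case.
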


Here $H^i_{\mm}(S/I)$ denotes the multigraded $i$-th local cohomology module supported on the graded maximal ideal $\mm$ of $S$.  As explained by Chardin in \cite{Char} extremal Betti numbers can be characterized in terms of vanishing of graded components of the local cohomology modules. Therefore Conjecture \ref{conjH} implies Conjecture \ref{conjB}. 

We have: 

\begin{prop} 
Conjecture \ref{conjH} holds for monomial ideals.
\end{prop}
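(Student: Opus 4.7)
The plan is to reduce the claim to the squarefree Gr\"obner degeneration theorem via a generic $\ZZ^n$-graded change of coordinates. First, I would observe that a monomial CS ideal $I$ is automatically squarefree: as noted after Corollary~\ref{flachi2}, every CS ideal is generated in multidegree $\le \one$, and for a monomial ideal this forces every minimal generator to involve at most one variable from each block $\{x_{i1},\dots,x_{i,d_i}\}$, always to the first power. Hence both $I$ and $J:=\gin(I)$ are squarefree monomial ideals, the latter being radical Borel-fixed by the CS hypothesis.

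Next, I would set up a Gr\"obner deformation of $I$ onto $J$. Pick a generic $g\in\prod_{i=1}^n\GL_{d_i}(K)$ and a term order $\tau$ refining the $\ZZ^n$-grading so that $J=\inid_\tau(g\cdot I)$. Since $g$ acts on $S$ as a $\ZZ^n$-graded $K$-algebra automorphism, it induces a $\ZZ^n$-graded isomorphism $S/I\cong S/(g\cdot I)$, and therefore
$$\dim_K H^i_\mm(S/I)_a=\dim_K H^i_\mm(S/(g\cdot I))_a$$
for all $i\in\NN$ and $a\in\ZZ^n$. The problem thus reduces to comparing the local cohomology of $g\cdot I$ with that of its initial ideal $J$.

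For this last step I would invoke the theorem of Conca--Varbaro on squarefree Gr\"obner degenerations: whenever $\inid_\tau(H)$ is squarefree, the graded Hilbert function of $H^i_\mm(S/H)$ equals that of $H^i_\mm(S/\inid_\tau(H))$ in every homological degree. Applied with $H=g\cdot I$, this gives the required equality. The main obstacle is that this comparison is usually stated only in the $\ZZ$-graded setting, whereas we need the $\ZZ^n$-graded refinement; however, because $\tau$ refines the multigrading, the flat family from $g\cdot I$ to $\inid_\tau(g\cdot I)$ is itself $\ZZ^n$-graded, so the standard cohomological argument applies component by component in each $\ZZ^n$-degree and yields the statement needed.
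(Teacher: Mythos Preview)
Your approach is fundamentally different from the paper's, and it has a structural problem that goes beyond a technical gap.

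The paper's proof runs through Alexander duality. Since $I$ is a squarefree monomial CS ideal, its Alexander dual $I^*$ is CS$^*$ by \cite[Theorem~1.14]{CDG2}; likewise $J^*$ is CS$^*$, and $I^*,J^*$ share the same $\ZZ^n$-graded Hilbert series. By \cite[Proposition~1.9]{CDG2} they therefore have identical $\ZZ^n$-graded (hence $\ZZ$-graded) Betti numbers. Lemma~\ref{KohjiF}, a combination of the two Hochster formulas, converts equality of $\ZZ$-graded Betti numbers of $I^*$ into equality of $\ZZ$-graded local cohomology dimensions of $S/I$. Finally the paper uses the standard semicontinuity inequality $\dim_K H^i_\mm(S/I)_a\le \dim_K H^i_\mm(S/J)_a$ in each $a\in\ZZ^n$, together with equality of $\ZZ$-graded totals, to force equality degree by degree. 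The monomial hypothesis is indispensable here: Alexander duality is only available for squarefree monomial ideals.

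Your argument, by contrast, invokes the Conca--Varbaro squarefree Gr\"obner degeneration theorem. Two issues:

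\emph{(i) Context.} That theorem postdates this paper and is precisely a proof of Conjecture~\ref{conjJH} (Herzog's conjecture), which the paper explicitly records as open. The proposition you are asked to prove is presented as partial \emph{evidence} toward Conjectures~\ref{conjB} and~\ref{conjH}; deducing it from Conca--Varbaro inverts the logical flow and renders the proposition a triviality. In the internal logic of the paper this citation is not available.

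\emph{(ii) The monomial hypothesis is never used.} You note that $I$ is squarefree, but Conca--Varbaro only requires the \emph{initial} ideal $J=\gin(I)$ to be squarefree, and that holds for every CS ideal by definition. Nothing else in your argument touches the monomial assumption. If your reasoning were complete it would establish Conjecture~\ref{conjH} for all CS ideals, not just monomial ones --- a clear sign that the heavy lifting has been outsourced to a theorem the paper does not yet have.

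A minor point: the ``component by component'' justification for a $\ZZ^n$-graded version of Conca--Varbaro is too casual; their proof passes through the decomposition theorem, not a routine flatness argument. This could be repaired by using only the $\ZZ$-graded statement and then the same inequality-plus-total-equality trick the paper employs --- but that does not address (i) and (ii).
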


\begin{proof} Let $I$ be a CS monomial ideal and let $J$ be its generic initial ideal. Since $J$ is an initial ideal of $I$ (after a change of coordinates) we have:
$$\dim_K H^i_{\mm}(S/I)_a\leq \dim_K H^i_{\mm}(S/J)_a$$
for all $i$ and $a\in \ZZ^n$. 
The Alexander duals $I^*$ and $J^*$  of $I$ and $J$  are CS$^*$ ideals with the same $\ZZ^n$-graded  Hilbert function. Hence they have the same $\ZZ^n$-graded Betti numbers \cite[Proposition 1.9]{CDG2}.  In particular, $I^*$ and $J^*$ have the same $\ZZ$-graded Betti numbers. Then we deduce from Lemma~\ref{KohjiF} below that    
$$\dim_K H^i_{\mm}(S/I)_j=\dim_K H^i_{\mm}(S/J)_j$$
for all $j\in \ZZ$ and all $i\geq 0$ that, in combination with the inequality above, implies the desired equality. 
\end{proof}

\begin{lemma}\label{KohjiF} 
Let $I$ be a squarefree monomial ideal in a polynomial ring $R=K[x_1,\dots,x_N]$. Denote by $I^*$ its Alexander dual. Then for every $i\geq 0$ and every $j>0$ one has 
$$\dim_K H^i_m(R/I)_{-j}= \sum_{v=1}^{\min(i,j)}  \binom{j-1}{v-1} \beta_{i-v, N-v}(I^*)$$ 
while  $H^i_m(R/I)_{j}=0$ for $j>0$ and  $\dim_K H^i_m(R/I)_{0}= \beta_{i, N}(I^*)$. 
\end{lemma}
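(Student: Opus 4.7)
The plan is to interpret $I$ as the Stanley--Reisner ideal $I_\Delta$ of a simplicial complex $\Delta$ on $[N]$, so that $I^{*} = I_{\Delta^{\vee}}$ for the Alexander dual complex $\Delta^{\vee}$. With this dictionary in place, I would apply Hochster's formula for the local cohomology of Stanley--Reisner rings to read off $\dim_K H^i_{\mm}(R/I)_a$ multidegree by multidegree, and then use Hochster's formula for Betti numbers together with combinatorial Alexander duality to repackage the answer in terms of $\beta_{\bullet,\bullet}(I^{*})$.

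Hochster's formula says $H^i_{\mm}(R/I)_a = 0$ unless $a \in \ZZ^N_{\leq 0}$, which immediately gives $H^i_{\mm}(R/I)_j = 0$ for all $j>0$, and for $a \leq 0$ with $F := \{k : a_k < 0\}$ it gives
$$\dim_K H^i_{\mm}(R/I)_a \;=\; \dim_K \tilde H^{\,i - |F| - 1}(\mathrm{link}_\Delta F;\, K).$$
The $\ZZ$-graded component in degree $-j$ is obtained by summing this over all $a \leq 0$ with $|a|=-j$; grouping terms by their support $F$, the number of $a$'s with a fixed $F$ of cardinality $v$ and $|a|=-j$ equals the number of compositions of $j$ into $v$ positive parts, namely $\binom{j-1}{v-1}$. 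Since $\tilde H^{i-v-1}$ vanishes for $v > i$ and clearly $v \leq j$, the sum is supported on $1 \leq v \leq \min(i,j)$.

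The remaining step is to identify, for each $v$,
$$\sum_{F \in \Delta,\, |F|=v} \dim_K \tilde H^{\,i-v-1}(\mathrm{link}_\Delta F;\, K) \;=\; \beta_{i-v,\, N-v}(I^{*}).$$
For this I use the combinatorial identity $(\mathrm{link}_\Delta F)^{\vee} = \Delta^{\vee}|_{[N]\setminus F}$ (dual taken on the vertex set $[N]\setminus F$), together with simplicial Alexander duality on $[N]\setminus F$, which has size $N-v$, to convert $\tilde H^{i-v-1}(\mathrm{link}_\Delta F;\,K)$ into $\tilde H_{N-i-2}(\Delta^{\vee}|_{[N]\setminus F};\,K)$. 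Summing over subsets $W = [N]\setminus F$ of size $N-v$ and invoking Hochster's formula for the Betti numbers of $I^{*} = I_{\Delta^{\vee}}$ yields exactly $\beta_{i-v,\,N-v}(I^{*})$. Substituting this into the previous paragraph gives the stated formula for $j>0$, and the $j=0$ case is obtained by specializing to $a=0$, $F=\emptyset$, and applying Alexander duality on all of $[N]$ to identify $\tilde H^{i-1}(\Delta;\,K)$ with $\tilde H_{N-i-2}(\Delta^{\vee};\,K) = \beta_{i,N}(I^{*})$.

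The delicate point, and the only real obstacle, is bookkeeping of indices in Alexander duality: the cohomological shift by $|F|$ coming from $\mathrm{link}_\Delta F$, the dimensional shift in combinatorial Alexander duality, and the grading shift in Hochster's formula must all be tracked simultaneously so that they combine into the particular pair $(i-v, N-v)$ appearing in $\beta_{i-v,N-v}(I^{*})$; everything else is a counting argument.
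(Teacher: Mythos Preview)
Your proposal is correct and follows essentially the same route as the paper: both combine Hochster's formula for local cohomology with Hochster's formula for the Betti numbers of $I^{*}$, then sum the resulting multigraded identities over all $a\in\ZZ^N_{\le 0}$ with $|a|=-j$, the binomial coefficient arising from counting $a$'s with a fixed support. The only difference is packaging: the paper invokes the ``dual form'' of Hochster's Betti number formula to obtain $\dim_K H^i_\mm(R/I)_{-a}=\beta_{i-|a|,\,\one-a}(I^{*})$ for $a\in\{0,1\}^N$ in one step, whereas you unpack that step explicitly via the identity $(\mathrm{link}_\Delta F)^{\vee}=\Delta^{\vee}|_{[N]\setminus F}$ and simplicial Alexander duality; one minor point you gloss over is that passing from the sum over $F\in\Delta$ to the sum over all $|W|=N-v$ is harmless because $F\notin\Delta$ forces $\Delta^{\vee}|_{[N]\setminus F}$ to be a full simplex, hence acyclic.
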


\begin{proof} Combining Hochster's formulas for Betti numbers (in the dual form) \cite[Corollary 1.40]{MS} and for local cohomology \cite[Theorem 13.13]{MS} one has that   
$$\dim_K H_\mm^i(R/I)_{-a}=\beta_{i-|a|, \one-a}(I^*)$$ for every $a\in \{0,1\}^N$. Computing the dimension of the $\ZZ$-graded component of $H_\mm^i(R/I)$ as sum of the corresponding multigraded components and taking into account Hochster's formula for local cohomology, one obtain the desired result. 
\end{proof}  

Let us conclude the section by stating a very general conjecture which is due to J\"urgen Herzog. Morally speaking Herzog's conjecture asserts  that a radical initial ideal behaves (homologically)  as the generic initial ideal with respect to the revlex order does. The conjecture, in  various forms,  has been discussed in several occasions by Herzog and his collaborators but, as far as we know, never appeared in print. A special case of it appears in Varbaro's PhD thesis as  Question 2.1.16 \cite{MV}.  Indeed, our conjecture \ref{conjB} is a special case of Herzog's conjecture.

\begin{conjecture}\label{conjJH} (Herzog) Let $I$ be a homogeneous ideal in a polynomial ring and $J$ an initial ideal of $I$ with respect to a term order. Assume $J$ is radical.  Then  $I$ and $J$ have the same extremal Betti numbers. In particular, $I$ and $J$ have the same projective dimension and regularity. 
\end{conjecture}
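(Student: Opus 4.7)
The plan is to attack this through graded local cohomology, building on Chardin's characterization that the extremal Betti numbers of a module $M$ are encoded in the non-vanishing behaviour of certain graded components of the modules $H^i_\mm(M)$. Under this translation, the conjecture becomes a statement comparing the Hilbert functions of $H^i_\mm(S/I)$ and $H^i_\mm(S/J)$ at the extremal positions, which is the same circle of ideas used in the monomial case proved just above.

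First, the Gr\"obner flat family connecting $I$ to $J$ yields the standard semicontinuity inequalities
$$\beta_{i,j}(S/I) \leq \beta_{i,j}(S/J) \quad \text{and} \quad \dim_K H^i_\mm(S/I)_j \leq \dim_K H^i_\mm(S/J)_j$$
for all $i$ and $j$. In particular one obtains $\projdim(I)\leq \projdim(J)$ and $\reg(I)\leq \reg(J)$ for free; the content of the conjecture is the reverse inequalities at the extremal slots.

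Second, I would try to exploit the radicality of $J$ via Stanley-Reisner theory: $S/J$ is a Stanley-Reisner ring, so both its graded Betti numbers and the graded components of its local cohomology admit Hochster-type combinatorial descriptions in terms of reduced simplicial homology of links. The monomial case proved just above uses exactly this machinery, together with Alexander duality and the fact that CS$^*$ ideals have the same Betti numbers as their initial ideals. The hope for the general conjecture is to feed in the Hochster input on the $J$-side and transfer it back to $I$ through an argument that uses only the radicality of the limit, rather than any Borel or CS structure on $I$.

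The main obstacle is precisely this transfer. For a general homogeneous $I$ whose initial ideal $J$ happens to be radical, there is no Alexander dual available on the $I$-side, and the semicontinuity inequality above is typically strict at many non-extremal $(i,j)$. One would need to show that at extremal positions the cokernel of the specialization map on $\mathrm{Ext}$ (equivalently, on local cohomology) vanishes. A natural strategy is an induction along the boundary of the Betti table: use $\reg(I)\leq \reg(J)$ together with the radicality of $J$ to handle the topmost slot, and then propagate downward using short exact sequences arising from a sufficiently generic hyperplane section, whose initial ideal should remain radical after a suitable coordinate change. Verifying this last point, and ensuring that radicality survives through successive sections without destroying the extremal positions one is tracking, is where I expect the real difficulty to lie.
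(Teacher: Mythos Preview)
This statement is a \emph{conjecture} in the paper, not a theorem: the paper offers no proof, and explicitly remarks that it is open in general, being known only for toric ideals in toric coordinates and for homogeneous Cohen--Macaulay ASLs with discrete Buchsbaum counterpart. So there is no ``paper's own proof'' to compare against.

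Your proposal is not a proof either, and you are candid about this. What you have written is a strategy outline: you correctly record the semicontinuity inequalities coming from the Gr\"obner flat family, and you correctly identify that the Hochster formulas give complete combinatorial control on the $J$-side. But the crux of the conjecture is exactly the step you label ``the main obstacle'': showing that at extremal positions the inequality $\dim_K H^i_\mm(S/I)_j \leq \dim_K H^i_\mm(S/J)_j$ is an equality. Your proposed induction via generic hyperplane sections does not go through as stated: there is no reason, in general, that passing to a hyperplane section commutes with taking initial ideals, nor that the initial ideal of the section remains radical even after a change of coordinates. The monomial case in the paper works because Alexander duality on both sides converts the local cohomology question into a Betti number question for CS$^*$ ideals, where rigidity is already known; without any structural hypothesis on $I$, no such dual object is available, and your sketch does not supply a replacement mechanism. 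In short, you have accurately located the difficulty but not resolved it --- which is consistent with the fact that the statement remains, as the paper says, a conjecture.
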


 Herzog's conjecture is known to be true for toric ideals (in toric coordinates) because in that case $J$ defines a Cohen-Macaulay  ring. Furthermore it is known in few other cases as,  for example,  homogeneous Cohen-Macaulay ASL with discrete Buchsbaum counterpart \cite[Thm.4.4]{MaM}.

\section{Binomial edge ideals}
\label{S2}

In this section we prove that every binomial edge ideal is a Cartwright-Sturmfels ideal. 

Let  $K$ be a field, let $S=K[x_1,\dots, x_n, y_1,\dots, y_n]$ and let $X$ be the  $2\times n$ matrix of variables
$$X=\left(
\begin{array}{cccc}
x_1 & x_2 & \cdots & x_n \\
y_1 & y_2 & \cdots & y_n 
\end{array}
\right)
.$$
Denote by $\Delta_{ij}$ the $2$-minor of $X$ corresponding to the column indices $i,j$, i.e.,
$$\Delta_{ij}=  \left | 
\begin{array}{cc}
x_i & x_j   \\
y_i & y_j  
\end{array}
\right |= x_iy_j-x_jy_i.
$$

Let $G$ be a graph on the vertex set $[n]=\{1,\dots, n\}$ and let 
$$J_G=(  \Delta_{ij} : \{i,j\} \mbox{ is an edge of } G ).$$
The ideal $J_G$ is called the binomial edge ideal of $G$. Binomial edge ideals are just ideals generated by subsets of the $2$-minors of $X$.  Herzog, Hibi, Hreinsdottir, Kahle, and Rauh in \cite{HHHKR} and, independently,  Ohtani in \cite{O} proved that  $J_G$ is radical. 
We will show that $J_G$ is a Cartwright-Sturmfels ideal with respect to the natural $\ZZ^n$-graded structure induced by $\deg(x_i)=\deg(y_i)=e_i\in \ZZ^n$.

\begin{thm}
\label{mainbinedge}
The $\ZZ^n$-graded generic initial ideal of $J_G$ is generated by the monomials $y_{a_1}\cdots y_{a_v}x_ix_j$ 
where $i, a_1,  \cdots,  a_v, j$ is a path in $G$. In particular $J_G$ is a Cartwright-Sturmfels ideal and therefore  all the initial ideals of $J_G$ are radical and $\reg(J_G)\leq n$. 
\end{thm}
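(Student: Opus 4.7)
The candidate for $\gin(J_G)$ is the monomial ideal $L$ of the statement. My strategy is to verify that $L$ is Borel fixed and radical with respect to the order $y_i>x_i$ inside each degree class $e_i$, and then to establish $\HS(S/L)=\HS(S/J_G)$. Once these are in hand, Definition~\ref{defCS} combined with the uniqueness in \cite[Proposition~1.6]{CDG2} yields at once that $J_G$ is CS and $\gin(J_G)=L$. The remaining assertions then follow from the general theory recalled in Section~\ref{S1}: initial ideals of CS ideals are radical, and the bound $\reg(J_G)\leq n$ comes from the regularity bounds for CS ideals in \cite[Section~1]{CDG2} together with the observation that a simple path in $G$ has at most $n$ vertices, so $L$ is generated in degrees $\leq n$.

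Radicality of $L$ is immediate since every generator is squarefree. For the Borel property, consider a generator $m=y_{a_1}\cdots y_{a_v}x_ix_j$ of $L$ and a Borel move $y_{a_k}\mapsto x_{a_k}$. The resulting monomial is divisible by $y_{a_1}\cdots y_{a_{k-1}}x_ix_{a_k}$, which is itself a generator of $L$ corresponding to the initial subpath $i,a_1,\ldots,a_k$; hence $L$ is Borel fixed.

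The main work lies in the Hilbert series equality. My first attempt would be to exhibit a term order $\tau$, after a generic $\ZZ^n$-graded coordinate change acting independently on each pair $(x_i,y_i)$, under which $\inid_\tau(J_G)=L$: a natural candidate is the lex order with $y_1>\cdots>y_n>x_1>\cdots>x_n$, with iterated $S$-polynomial reductions along a path $i,a_1,\ldots,a_v,j$ in $G$ producing a polynomial whose leading term is exactly $y_{a_1}\cdots y_{a_v}x_ix_j$, refining the Gr\"obner basis computations of \cite{HHHKR,O}. An alternative route uses the known primary decomposition $J_G=\bigcap_T P_T(G)$ from \cite{HHHKR}: each $P_T(G)=(x_t,y_t:t\in T)+\sum_\ell J_{K_{C_\ell}}$, where $C_\ell$ are the components of $G\setminus T$, and each $J_{K_{C_\ell}}$ is a CS ideal of $2$-minors by \cite{CDG1}; one then computes $\HS(S/P_T(G))$ and sums via inclusion-exclusion, matching the result to $\HS(S/L)$ through a combinatorial bijection between admissible subsets $T$ and minimal primes of $L$. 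The combinatorial bookkeeping---either the bookkeeping of $S$-pair reductions indexed by paths, or the bijection between admissible subsets and the minimal primes $P_a$ of $L$---is the main obstacle, and in both approaches the Borel-fixedness of $L$ established above plays a crucial role in constraining the answer.
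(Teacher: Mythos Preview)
Your overall architecture is sound---show that the candidate $L$ is radical and Borel fixed, then match Hilbert series and invoke \cite[Proposition~1.6]{CDG2}---but two concrete problems prevent this from being a proof.

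First, the Borel direction is reversed. With the order $y_i>x_i$ the Borel move is $x_{a}\mapsto y_{a}$, not $y_{a}\mapsto x_{a}$. Applying that move to the generator $x_ix_j$ of $L$ (for an edge $\{i,j\}$) yields $y_ix_j$, which is not in $L$ since every generator of $L$ carries two $x$-factors. The ideal $L$ is Borel fixed only for the order $x_i>y_i$; your argument about subpaths is then correct, but it is checking the move $y_{a_k}\mapsto x_{a_k}$, which is the Borel move for that order. Correspondingly, the term order in method~(a) must have the $x_i$'s dominating the $y_i$'s; with your lex order $y_1>\cdots>y_n>x_1>\cdots>x_n$ the leading term of (the generic transform of) $\Delta_{ij}$ is a mixed monomial $x_jy_i$, not $x_ix_j$, and the path monomials never appear as leading terms.

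Second---and this is the real gap---the Hilbert series equality is the entire content of the theorem, and you have not established it. What the paper does is your method~(a), carried out with the correct order: one applies the upper-triangular change $y_i\mapsto \alpha_i x_i+y_i$ (generic $\alpha_i$), so that $\Delta_{ij}$ becomes $F_{ij}=x_ix_j-\lambda_{ij}\Delta_{ij}$ with $\lambda_{ij}=(\alpha_i-\alpha_j)^{-1}$, and then one proves by Buchberger's criterion that the set $\{y_aF_{ij}: i,a_1,\ldots,a_v,j \text{ a path}\}$ is a Gr\"obner basis. The delicate step is the $S$-pair of $y_aF_{1n}$ and $y_bF_{2n}$ when the two paths meet only at $n$: after three explicit reductions one is left with $y_ay_by_1\Delta_{2n}(-\lambda_{1n}\lambda_{12}-\lambda_{1n}\lambda_{2n}+\lambda_{12}\lambda_{2n})$, and this vanishes precisely because of the Pl\"ucker-type identity
\[
(\alpha_2-\alpha_n)+(\alpha_1-\alpha_2)-(\alpha_1-\alpha_n)=0.
\]
This computation is not a refinement of the Gr\"obner bases in \cite{HHHKR,O}; it is a different Gr\"obner basis in different coordinates, and the identity above is what makes it close up. Your method~(b) could in principle be made to work, but it presupposes the primary decomposition (hence radicality) from \cite{HHHKR}, and matching the two inclusion--exclusion sums would require exactly the combinatorial bijection you flag as the obstacle; the paper avoids this entirely by the direct computation.
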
 

Here by a path of $G$ we mean a sequence of vertices without repetitions such that every pair of adjacent vertices form an  edge of the graph.  Note that in the description of the generators of the generic initial ideal one can assume that $i<j$ and that the path is minimal in the sense that  the only edges among the vertices $i, a_1,  \cdots,  a_v, j$ are $(i,a_1), (a_1,a_2), \dots, (a_v,j)$. That $\reg(J_G)\leq n$ has been proved originally by  Matsuda and Murai  in \cite{MM}, where they also conjectured that equality holds if and only if $G$ is a path of length $n-1$. In \cite{BBS} a universal Gr\"obner basis for $J_G$ is described and this implies that all the initial ideals of $J_G$,  in the given coordinates,  are radical. 

\begin{proof} Consider any term order such that $x_i>y_i$ for all $i$. 
To compute the generic initial ideal we first apply a multigraded upper triangular transformation $\phi$  to $J_G$, i.e. for every $i$ we have $\phi(x_i)=x_i$ and  $\phi(y_i)=\alpha_ix_i+y_i$ with $\alpha_i\in K$.  We obtain a matrix 
$$\phi(X)=\left(
\begin{array}{cccc}
x_1 & x_2 & \cdots & x_n \\
\alpha_1x_1+y_1 & \alpha_2x_2+y_2 & \cdots & \alpha_nx_n+y_n 
\end{array}
\right)
$$
whose $2$-minors are: 
$$\phi(\Delta_{ij})=  \left | 
\begin{array}{cc}
x_i & x_j   \\
\alpha_ix_i+y_i & \alpha_jx_j+y_j  
\end{array}
\right |= (\alpha_j-\alpha_i)x_ix_j+\Delta_{ij}.
$$
Assume that $\alpha_j\neq \alpha_i$ for $i\neq j$. We multiply  $\phi(\Delta_{ij})$ by the inverse of  $(\alpha_j-\alpha_i)$ and obtain: 
$$F_{ij}=x_ix_j-\lambda_{ij}\Delta_{ij}$$
with 
$$\lambda_{ij}=(\alpha_i-\alpha_j)^{-1}$$
so that $F_{ij}$ is monic. 
For later application we observe the following. For indices $1\leq i<j<k\leq n$ we consider the
$S$-polynomial $S(F_{ik}, F_{jk})$. Expanding   $S(F_{ik}, F_{jk})$  we have 
$$S(F_{ik}, F_{jk})=x_jF_{ik}-x_iF_{jk}=-\lambda_{jk}y_jx_ix_k+\lambda_{ik}y_ix_jx_k+(\lambda_{jk}-\lambda_{ik})y_kx_ix_j$$ 
 Now we perform division with reminder using $F_{ik}, F_{jk}$ and $F_{ij}$ and we have: 

$$S(F_{ik}, F_{jk})=-\lambda_{jk}y_jF_{ik}+\lambda_{ik}y_iF_{jk}+
(\lambda_{jk}-\lambda_{ik})y_kF_{ij}+r.$$
The remainder $r$ is 
$$r=-\lambda_{jk}y_j\lambda_{ik}\Delta_{ik}+\lambda_{ik}y_i\lambda_{jk}\Delta_{jk}+(\lambda_{jk}-\lambda_{ik})y_k\lambda_{ij}\Delta_{ij}$$
that is 
$$r=\lambda_{jk}\lambda_{ik}(-y_j\Delta_{ik}+ y_i \Delta_{jk}) +(\lambda_{jk}-\lambda_{ik})y_k\lambda_{ij}\Delta_{ij}.$$
Using the syzygy among minors:  
$$y_i\Delta_{jk}-y_j\Delta_{ik}+y_k\Delta_{ij}=0$$ 
we have 
$$r=\lambda_{jk}\lambda_{ik}(-y_k\Delta_{ij}) +(\lambda_{jk}-\lambda_{ik})y_k\lambda_{ij}\Delta_{ij}=
(-\lambda_{jk}\lambda_{ik} +\lambda_{jk}\lambda_{ij}-\lambda_{ik}\lambda_{ij})y_k\Delta_{ij}$$
and 

$$-\lambda_{jk}\lambda_{ik} +\lambda_{jk}\lambda_{ij}-\lambda_{ik}\lambda_{ij}=0$$ as can be checked by direct computation. Hence $r=0$ and we have a division with remainder $0$: 

\begin{equation}\label{eq*} S(F_{ik}, F_{jk})=-\lambda_{jk}y_jF_{ik}+\lambda_{ik}y_iF_{jk}+
(\lambda_{jk}-\lambda_{ik})y_kF_{ij}.\end{equation}

Now  we return to the ideal $J_G$ and its image under $\phi$: 

$$\phi(J_G)=(  F_{ij} : \{i,j\} \mbox{ is an edge of } G ). $$
Set 
$$F=\{  y_{a}F_{ij} :   i, a_1,  \cdots,  a_v, j \mbox{  is a path in } G\}$$ 
where
$$y_{a}=y_{a_1}\cdots y_{a_v}.$$
It is enough to  prove  that $F$  is a Gr\"obner basis for $\phi(J_G)$ for every $\phi$ such that $\alpha_j\neq \alpha_i$ for $i\neq j$.   We first observe that  $F\subset \phi(J_G)$, i.e. $y_{a}F_{ij} \in \phi(J_G)$ for every path   $i, a_1,  \cdots,  a_v, j$ in $G$. Since $F_{ij}$ and $\phi(\Delta_{ij})$ differ only by a non-zero scalar we may as well prove that $y_{a}\phi(\Delta_{ij}) \in \phi(J_G)$ for every path   $i, a_1,  \cdots,  a_v, j$ in $G$. 
 This is proved easily by induction on $v$, the case $v=0$ being trivial, applying to the matrix   $\phi(X)$ the following relation 
 $$(z_{1i}, z_{2i})\Delta_{jk}(Z) \subseteq  ( \Delta_{ij}(Z),  \Delta_{ik}(Z) )$$
 that holds for every $2\times n$ matrix $Z=(z_{ij})$ and every triplet of column indices $i,j,k$. 
 To prove that $F$ is a Gr\"obner basis we take two elements $y_{a}F_{ij}$ and $y_{b}F_{hk}$ in $F$ and prove that the corresponding $S$-polynomial reduces to $0$ via $F$. Here $a=a_1,\dots,a_v$ and $b=b_1\dots, b_r$ and $i,a,j$ and $h,b,k$ are  paths  in $G$.   We distinguish three cases: 
 
 Case 1). If $\{i,j\}=\{h,k\}$, we may assume $i=h$ and $j=k$. The corresponding $S$-polynomial is $0$. 
 
 Case 2). If $\{i,j\}\cap \{h,k\}=\emptyset$. Let $u=\GCD(y_a, y_b)$. Then $y_aF_{ij}=u (y_a/u)F_{ij}$ and 
 $y_bF_{hk}=u (y_b/u)F_{hk}$. Note that $(y_a/u)F_{ij}$ and $(y_b/u)F_{hk}$ have coprime leading terms and hence they form a Gr\"obner basis.  If a Gr\"obner basis is multiplied with a single polynomial the resulting set of polynomials is still a Gr\"obner basis. Hence $\{y_aF_{ij}, y_bF_{hk} \}$ is  a Gr\"obner  basis and the  $S$-polynomial of $y_aF_{ij}, y_bF_{hk}$ reduces to $0$ using only $y_aF_{ij}, y_bF_{hk}$. 
 
Case 3). If $\# \{i,j\}\cap \{h,k\}=1$. Renaming the column indices  we may assume that $i=1$, $h=2$ and $j=k=n$. Hence we deal with $y_aF_{1n}$ and $y_bF_{2n}$. Let $u=\LCM(y_a,y_b)$. We have:
$$S(y_aF_{1n}, y_bF_{2n})=uS(F_{1n}, F_{2n}).$$
Considering~(\ref{eq*}) with $i=1$, $j=2$ and $k=n$ and multiplying both sides with $u$  we have: 

\begin{equation}\label{eq**}\quad S(y_aF_{1n}, y_bF_{2n})=-\lambda_{2n}y_2uF_{1n}+\lambda_{1n}y_1uF_{2n}+
(\lambda_{2n}-\lambda_{1n})y_nuF_{12}.\end{equation}
Since~(\ref{eq*}) is  a division with reminder $0$ of  $S(F_{1n}, F_{2n})$ with respect to $F_{1n}, F_{2n}$ and $F_{12}$ we may conclude that~(\ref{eq**}) is a division with reminder $0$ of  $S(y_aF_{1n}, y_bF_{2n})$ with respect to the set $F$ if we prove that $y_2uF_{1n}, y_1uF_{2n}$ and $y_nuF_{12}$ are multiples of elements of $F$.  
Clearly $y_2uF_{1n}$ is a monomial multiple of $y_aF_{1n}$ and $y_1uF_{2n}$ is a monomial multiple of $y_bF_{1n}$. So we are left with $y_nuF_{12}$. 
If $u$ is divisible by a monomial $y_d=y_{d_1}\cdots y_{d_t}$ such that $1,d_1,\dots,d_t,2$ is a path in $G$ then  $y_nuF_{12}$   is multiple of the element $y_dF_{12}$ of $F$. On the other hand, if $u$ is not divisible  by a monomial $y_d=y_{d_1}\cdots, y_{d_t}$ such that $1,d_1,\dots,d_t,2$  is a path in $G$ then 
$$\{1,a_1,\dots, a_v\} \cap \{2,b_1,\dots, b_r\} =\emptyset \mbox{ and } u=y_ay_b.$$  
In this case $1,a,n,b,2$ is a path from $1$ to $2$ in $G$ and hence $y_nuF_{12}=y_ny_ay_bF_{12}$ is indeed in $F$.

This concludes the proof that the set $F$ is a Gr\"obner basis. The remaining statements follow from general facts on Cartwright-Sturmfels ideals established in \cite[Remark 1.5, Corollary 1.15]{CDG2}. 
\end{proof}

We describe now the minimal primes of the generic initial ideal of $J_G$. We denote by $c(G)$ the number of connected components of a graph $G$. 
For a subset $T$ of $[n]$ let $G_T$ be the restriction of $G$ to $T$ and set 
$$U_T=( x_ix_j : i,j\in T \mbox{ and are connected by a path in } G_T)+\sum_{i\not\in T}(x_i, y_i).$$
It is clear that $\gin(J_G)\subseteq U_T$ for every $T$.
Furthermore let $E$ be a subset of $T$ such that $E$ contains exactly one vertex for each connected component of $G_T$
 and set 
$$U_{T,E}=( x_i  : i\in T\setminus E)+\sum_{i\not\in T}(x_i, y_i).$$
 Then: 
\begin{prop} The minimal primes of $\gin(J_G)$ are exactly the ideals $U_{T,E}$ where $T$ is chosen so that for every $i\in [n]\setminus T$ one has $c(G_{T\cup \{i\}})<c(G_T)$.   
\end{prop}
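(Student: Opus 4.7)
Since $\gin(J_G)$ is a squarefree monomial ideal by Theorem~\ref{mainbinedge}, its minimal primes are in bijection with the facets of the Stanley-Reisner complex $\Delta$ on the vertex set $\{x_1,\dots,x_n,y_1,\dots,y_n\}$. My plan is to identify these facets with the complements
$$F_{T,E}=\{x_i:i\in E\}\cup\{y_i:i\in T\}$$
of the ideals $U_{T,E}$ satisfying the stated component-drop condition; this will prove the proposition.

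For the forward direction I fix an admissible pair $(T,E)$ and first check $\gin(J_G)\subseteq U_{T,E}$, equivalently that $F_{T,E}$ is a face: a generator $y_{a_1}\cdots y_{a_v}x_ix_j$ coming from a path $i,a_1,\dots,a_v,j$ in $G$ must contain a factor lying in $U_{T,E}$, for otherwise the whole path would lie in $T$ with both endpoints in $E$, contradicting that $E$ is a transversal of the components of $G_T$. Maximality of $F_{T,E}$ is shown by ruling out extension by every single variable outside $F_{T,E}$. Extending by $x_i$ for $i\in T\setminus E$ is blocked by the generator arising from a minimal path in $G_T$ from $i$ to the unique vertex of $E$ in the component of $i$. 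The hypothesis $c(G_{T\cup\{i\}})<c(G_T)$ forces every $i\notin T$ to have a neighbor in $T$, so an analogous path argument blocks extension by $x_i$ for $i\notin T$; the same hypothesis forces $i$ to have neighbors in at least two distinct components of $G_T$, and a path in $G_{T\cup\{i\}}$ through $i$ between the corresponding two representatives of $E$ blocks extension by $y_i$.

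For the converse I start with a minimal prime $P=(x_i:i\in A)+(y_i:i\in B)$ and first prove $B\subseteq A$. If $j\in B\setminus A$, then by minimality some generator $y_{a_1}\cdots y_{a_v}x_px_q$ of $\gin(J_G)$ is no longer covered once $y_j$ is removed from $P$, which forces $j=a_k$ for some $k$, $a_l\notin B$ for $l\neq k$, and $p,q\notin A$. Then the subpath from $p$ to $j$ yields another generator $y_{a_1}\cdots y_{a_{k-1}}x_px_j$ of $\gin(J_G)$ whose every factor lies outside $P$, a contradiction. Setting $T=[n]\setminus B$ and $E=[n]\setminus A$, parallel subpath/minimality arguments then show that $E$ meets each component of $G_T$ in exactly one vertex (two such vertices would produce an uncovered generator via a path in $G_T$ connecting them, while a component disjoint from $E$ would allow dropping some $x_i$ from $P$) and that $c(G_{T\cup\{i\}})<c(G_T)$ for every $i\in[n]\setminus T$ (otherwise every generator of $\gin(J_G)$ involving $y_i$ would have another factor in $P$, so $y_i$ could be dropped).

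The main technical hurdle is the bookkeeping in these minimality arguments: each attempted removal of a generator of $P$ requires exhibiting, by extracting an appropriate subpath of a witnessing path in $G$, a specific generator of $\gin(J_G)$ left uncovered by the smaller prime. The forward direction, by contrast, reduces to a direct verification once one identifies the correct combinatorial extensions to rule out.
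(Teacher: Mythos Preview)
Your proposal is correct and complete, but it takes a genuinely different route from the paper's proof. The paper exploits the fact that $\gin(J_G)$ is Borel fixed, hence so are its minimal primes; this instantly gives $y_i\in P\Rightarrow x_i\in P$ (your inclusion $B\subseteq A$) without any path argument. The paper then works through the auxiliary \emph{non-prime} ideals $U_T$: it observes $\gin(J_G)=\bigcap_T U_T$, identifies the irredundant $T$'s via a containment criterion among the $U_T$, and only at the very end decomposes each surviving $U_T$ into its prime components $U_{T,E}$. Your argument, by contrast, bypasses Borel-fixedness and the intermediate ideals $U_T$ altogether: you go straight to the Stanley--Reisner facets and do everything by explicit path and subpath manipulations based on the generator description in Theorem~\ref{mainbinedge}. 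What this buys you is a fully elementary and self-contained proof that would apply verbatim to any squarefree monomial ideal with that generating set, regardless of its origin as a generic initial ideal; what the paper's approach buys is brevity, since the $B\subseteq A$ step collapses to a one-line structural observation and the remaining casework is organized cleanly through the two-stage decomposition $\gin(J_G)=\bigcap_T U_T=\bigcap_{T,E} U_{T,E}$.
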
 

\begin{proof} Let $P$ be a minimal prime of $\gin(J_G)$ and let $T=\{ i \in [n] : y_i \not\in P\}$. Since $P$ is Borel fixed then it follows from Theorem \ref{mainbinedge}   that  $U_T\subseteq P$ and so it follows that $\gin(J_G)=\bigcap_T U_T$. Now, $U_T\subseteq  U_{T_1}$ if and only if $T_1\subseteq T$ and $c(G_{T_1})=c(G_T)$. So it follows that $\gin(J_G)=\bigcap_T U_T$ where the intersection is restricted to the $T$  such that for every $i\in [n]\setminus T$ one has $c(G_{T\cup \{i\}})<c(G_T)$. Finally, observe that $U_T=\bigcap U_{T,E}$ where the intersection ranges over $E\subset T$ such that  $E$ contains exactly one vertex for each connected component of $G_T$. 
\end{proof}

In \cite{SZ} Schenzel and Zafar computed the structure of the local cohomology modules (indeed of the corresponding Ext-modules)   of the   binomial edge ideal associated to a complete bipartite graph.  These results might shed some light on Conjecture \ref{conjH} and might suggest more precise versions of it.

\section{Closure of linear spaces in products of projective spaces} 
\label{S3}

Let $T=K[x_1,\dots, x_n]$ be a polynomial ring with a standard $\ZZ^u$-graded  structure, i.e. $\deg(x_i)\in \{e_1,\dots, e_u\}$ for every $i$.  Let $S=T[y_1,\dots,y_u]$ with the $\ZZ^u$-graded structure obtained  by extending that of  $T$ by letting $\deg y_i=e_i\in \ZZ^u$. Given  $f=\sum_{i=1}^v \lambda_i x^{a_i}\in T\setminus \{0\}$ we consider its $\ZZ^u$-homogenization $f^{\hom}\in S=T[y_1,\dots,y_u]$ defined as $$f^{\hom}=\sum_{i=1}^v \lambda_i x^{a_i}y^{b-b_i}$$ 
where $\deg x^{a_i}=b_i\in \ZZ^u$ and $y^b=\LCM(y^{b_1},\dots, y^{b_v} )$.
For any $c\in \ZZ^u$ such that $\LCM(y^{b_1},\dots, y^{b_v} ) | y^c$, we define 
$$f^{\hom,c}=\sum  \lambda_{i} x^{a_i}y^{c-b_i}.$$
By construction, $f^{\hom}$ is $\ZZ^u$-homogeneous of degree $b$ and $f^{\hom,c}=f^{\hom}y^{c-b}$. Given an ideal $I$ of $T$ its $\ZZ^u$-homogenization is defined as 
$$I^{\hom}=( f^{\hom} : f\in I\setminus\{0\} )\subset S$$
and it is clearly a $\ZZ^u$-graded ideal of $S$. For generalities about homogenization of ideals we refer the reader to \cite{KR}. Here we just recall that if $I=(f_1,\dots,f_t)$ then 
$$I^{\hom}=(f_1^{\hom},\dots,f_t^{\hom}):(\prod_{i=1}^u y_i)^{\infty},$$ see \cite[Corollary 4.3.8]{KR}. Let $c\in \ZZ^u$ such that for every $i=1,\dots, t$ and for every monomial $x^a$ in the support of $f_i$ we have $y^v | y^c$, where $v=\deg x^a\in \ZZ^u$. Then we have 
$$I^{\hom}=(f_1^{\hom,c},\dots,f_t^{\hom,c}):(\prod_{i=1}^u y_i)^{\infty},$$
because $f_i^{\hom,c}$ and  $f_i^{\hom}$ differ only by a monomial in the $y$'s.   

We denote by $I^\star$ the largest $\ZZ^u$-graded ideal of $T$ contained in $I$,  i.e. the ideal generated by the $\ZZ^u$-graded elements in $I$. We show that: 

\begin{thm}\label{homoline} 
Let $T=K[x_1,\dots, x_n]$ be a polynomial ring with a standard $\ZZ^u$-graded  structure. Let  $V$ be a 
vector space of linear forms of $T$ (i.e. elements of  degree $1$ with respect to the standard $\ZZ$-graded structure) and $J(V)$ be the ideal generated by $V$.  Then 
\begin{itemize}  
\item[(1)]  $J(V)^{\hom}$  and $J(V)^\star$ are Cartwright-Sturmfels ideals.   
\item[(2)] Both   $J(V)^{\hom}$  and $J(V)^\star$  define  Cohen-Macaulay normal rings.  
\end{itemize} 

\end{thm}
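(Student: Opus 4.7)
The plan is to derive both (1) and (2) from Brion's Theorem~\ref{Brion} by exhibiting $J(V)^{\hom}$ and $J(V)^\star$ as $\ZZ^u$-graded prime ideals whose quotients carry multiplicity-free multidegree. We may assume $K$ algebraically closed by extension of scalars, since the Hilbert series, Cohen-Macaulayness, and normality all descend.

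Let $L\subset \AAA^n$ be the zero locus of $V$, an irreducible linear subspace through the origin. The ideal $J(V)^{\hom}$ is the defining ideal of the closure $\tilde L$ of $L$ in $\prod_{k=1}^u \PP^{a_k}$, hence is prime. To see that $S/J(V)^{\hom}$ has multiplicity-free multidegree, recall that the coefficient $e(\alpha)$ of $z^\alpha$ equals the number of intersection points of $\tilde L$ with a generic product $L_1\times\cdots\times L_u$, where $L_k\subset \PP^{a_k}$ is a generic linear subspace of dimension $\alpha_k$ and $\sum\alpha_k$ equals the codimension of $\tilde L$. For generic $L_k$ the intersection lies in the standard affine chart $\{y_1\cdots y_u\ne 0\}$, on which $\tilde L$ restricts to $L$ and each $L_k$ to a generic affine linear subspace of $\AAA^{a_k}$. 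Two affine linear subspaces of $\AAA^n$ of complementary codimensions meet in either $0$ or $1$ points, so $e(\alpha)\in\{0,1\}$. Brion's theorem then yields (1) and (2) for $J(V)^{\hom}$.

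For $J(V)^\star$, we first verify that $J(V)^\star = J(V)^{\hom}\cap T$ under the natural inclusion $T\hookrightarrow S$. The containment $J(V)^\star \subseteq J(V)^{\hom}\cap T$ is immediate, since every $\ZZ^u$-graded element $g\in J(V)$ satisfies $g^{\hom}=g\in J(V)^{\hom}$. For the reverse, any $g\in J(V)^{\hom}\cap T$ decomposes as $g=\sum g_d$ into $\ZZ^u$-graded parts; each $g_d$ lies in $J(V)^{\hom}\cap T$ by the $\ZZ^u$-graded structure of $J(V)^{\hom}$, and substituting $y_k\mapsto 1$ in a relation expressing $g_d$ via generators of $J(V)^{\hom}$ shows $g_d\in J(V)$, so $g_d\in J(V)^\star$. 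In particular $J(V)^\star$ is prime, as the contraction of the prime $J(V)^{\hom}$. Geometrically, the projective variety cut out by $J(V)^\star$ in $\prod_k \PP^{a_k-1}$ is the closure $\overline{\Phi(L)}$ of the image of $L$ under the rational multi-projectivization $\Phi\colon \AAA^n \dashrightarrow \prod_k \PP^{a_k-1}$. The multidegree coefficient at $\beta$ counts intersection points of $\overline{\Phi(L)}$ with a generic $\prod_k L_k'$; pulling back under $\Phi$ this becomes the intersection of $L$ with a generic $(K^*)^u$-invariant linear subspace of $\AAA^n$ through the origin, which is itself linear of the expected dimension and, generically, descends to a single point of $\prod_k \PP^{a_k-1}$. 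Thus $T/J(V)^\star$ also has multiplicity-free multidegree, and a second application of Brion delivers (1) and (2) for $J(V)^\star$.

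The principal obstacle lies in the orbit-counting step for $J(V)^\star$: the effective action of $(K^*)^u$ on $L$ need not be free, so one must verify that the pullback of a generic $\prod_k L_k'$ meets $L$ in a single $(K^*)^u$-orbit rather than several. This reduces to a dimension and transversality count, exploiting the fact that all objects in sight—$L$, the pullback, and the generic linear subspaces—are linear, so the answer is dictated by linear algebra encoded by the matroid of $V$.
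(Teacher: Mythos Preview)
Your approach for $J(V)^{\hom}$ is correct but genuinely different from the paper's. The paper establishes (1) first, without invoking Brion: it exhibits $J(V)^{\hom}$ as $I_u(X_{[u]}):(\prod_{i=1}^u y_i)$ for an explicit row-graded matrix $X_{[u]}$, then appeals to the fact that maximal minors of row-graded matrices are CS \cite[Corollary~1.19]{CDG2} and that the CS property is preserved under colon by products of linear forms \cite[Theorem~1.16]{CDG2}. Only afterwards does it apply Brion to deduce (2). Your route---verifying multiplicity-freeness geometrically via intersections of affine linear subspaces and then extracting both (1) and (2) from Brion in one stroke---is shorter and more conceptual, and in fact anticipates the argument the paper later gives for Theorem~\ref{multideg}. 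The paper's determinantal presentation, on the other hand, is the stepping stone to the explicit generators in Theorem~\ref{determinant}, which your approach does not recover.

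For $J(V)^\star$, however, your argument has a genuine gap: you correctly identify the obstacle in the orbit-counting step and gesture at a matroid-theoretic resolution, but do not carry it out. The fibres of $\Phi|_L$ need not be full $(K^*)^u$-orbits, and showing that the pullback of a generic $\prod_k L_k'$ meets $L$ in a single orbit requires real work that you have not done. There is a much simpler fix, which is exactly what the paper does: once $J(V)^{\hom}$ is known to be CS, the identity $J(V)^\star = J(V)^{\hom}\cap T$ (which you verified, and which the paper cites from \cite[Tutorial~50]{KR}) combined with closure of the CS property under elimination \cite[Theorem~1.16]{CDG2} immediately yields that $J(V)^\star$ is CS. Primeness follows as you argued, and (2) then comes from Brion.
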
 

If $K$ is algebraically closed, $T$ is equipped with the natural $\ZZ^n$-graded structure, and $L$ is the zero locus $V$ in $\AAA_K^n$, then ideal $J(V)^{\hom}$ is exactly the defining ideal of the closure $\tilde{L}$ of $L$ in $\PP^{1}\times \dots \times \PP^{1}$,  i.e. $J(V)^{\hom}=I(\tilde{L})$ in the notation of Ardila and Boocher.  
If instead we equip $T$ with a  $\ZZ^u$-graded structure where $a_i$ variables have degree $e_i$, then the ideal $J(V)^{\hom}$ is the ideal associated to the closure of $L$ in the  product $\PP^{a_1}\times \dots \times \PP^{a_u}$, i.e. $J(V)^{\hom}$ is the ideal denoted by $I_a(\tilde{L})$ in the introduction. 

\begin{proof} (1) 
The assertion on  $J(V)^\star$ follows from the one on $J(V)^{\hom}$  and \cite[Theorem 1.16]{CDG2} since,    by \cite[Tutorial 50] {KR},  one has  $$J(V)^\star=J(V)^{\hom}\cap T.$$
To prove the assertion for $J(V)^{\hom}$ we argue as follows. For a matrix $X$ and an integer $t$ we denote by $I_t(X)$ the ideal generated by the $t$-minors of $X$. Let $\ell$ be a linear form of $T$, say  $\ell=\sum_{i=1}^u \ell_i$ where $\ell_i$ is $\ZZ^u$-homogeneous of degree $e_i\in \ZZ^u$.  Set $\one=\sum_{1}^u e_i$ and notice that $\ell^{\hom,\one}= \prod_{j=1}^u  y_j \sum_{i=1}^u \ell_i/y_i$  can be written as 
$$H^{\hom,\one}=\det 
\left(\begin{array}{ccccccccccc} 
  y_1 & 0 & \cdots   &   \cdots &   0 & \ell_1     \\
 -y_2& y_2 & 0 &   \cdots &   0 & \ell_2     \\
 0     & -y_3& y_3 &     \cdots &   0 & \ell_3    \\
\vdots&\vdots &  \vdots & \ddots&\vdots &  \vdots   & \\
\vdots&\vdots &  \vdots & \ddots& y_{u-1} &  \vdots   & \\
 0     & 0& \cdots & \cdots   &-y_u & \ell_u    
\end{array}
\right).
$$
Now, if $V=\langle L_1,\dots, L_v\rangle$, let $X_{[u]}$ be the $u\times (v+u-1)$ matrix with block decomposition 
$$X_{[u]}=(Y_{[u]}\ |\  M_{[u]} )$$ 
where 
$$Y_{[u]}=
\left(\begin{array}{ccccccccccc} 
  y_1 & 0 & \cdots   &   \cdots &   0      \\
 -y_2& y_2 & 0 &   \cdots &   0      \\
 0     & -y_3& y_3 &     \cdots &   0     \\
\vdots&\vdots &  \vdots & \ddots&\vdots &   \\
\vdots&\vdots &  \vdots & \ddots& y_{u-1} &   \\
 0     & 0& \cdots & \cdots   &-y_u 
\end{array}
\right)
$$
and 
$$M_{[u]}=\left(\begin{array}{cccccccccccccc} 
L_{11}  & \dots &L_{1v}    \\
L_{21}  & \dots &L_{2v}   \\
L_{31}   & \dots &L_{3v}    \\
\vdots&\vdots &  \vdots     \\
\vdots&\vdots &  \vdots    \\
L_{u1}  & \dots &L_{uv}    
\end{array}
\right)$$
is the $u\times v$ matrix whose $i$-th column is  given by the $\ZZ^u$-homogeneous components of $L_i$, that is $L_i=\sum_{q=1}^u  L_{qi}$. Let  $H_{[u]}$ be the ideal generated  by the $u$-minors $\Delta_1 ,\dots, \Delta_v$ of $X_{[u]}$, where $\Delta_i$  involves the $(u-1)$ columns of $Y_{[u]}$ and the $i$-th  column of $M_{[u]}$. By construction $H_{[u]} =(L_1^{\hom,\one},  \dots, L_v^{\hom,\one})$, hence $J(V)^{\hom}=H_{[u]} : (\prod_{i=1}^u y_i)^{\infty}$.   
It follows immediately from the  straightening law for minors (see \cite[Section 4]{BV}) that $I_{u-1}(Y_{[u]} )I_u(X_{[u]} )\subseteq H_{[u]} $ and obviously $H_{[u]} \subseteq I_u(X_{[u]} )$. In this case   $I_{u-1}(Y_{[u]} )$ is generated by the squarefree monomials of degree $u-1$ in the variables $y_1,\dots,y_u$. Hence  
$$ I_u(X_{[u]} ) : (\prod_{i=1}^u y_i)^{\infty}\supseteq 
 H_{[u]} : (\prod_{i=1}^u y_i)^{\infty}\supseteq 
 \left(  I_{u-1}(Y_{[u]} )I_u(X_{[u]} ) \right) : (\prod_{i=1}^u y_i)^{\infty}= 
  I_u(X_{[u]}  ) : (\prod_{i=1}^u y_i)^{\infty}.$$
Summing up, we have shown that
$$J(V)^{\hom}=H_{[u]} : (\prod_{i=1}^u y_i)^{\infty}= I_u(X_{[u]} ) : (\prod_{i=1}^u y_i)^{\infty}.$$
The matrix $X_{[u]} $ is row-graded, i.e. the entries in its $i$-th row are homogeneous of degree $e_i\in \ZZ^u$. Hence by \cite[Corollary 1.19]{CDG2} its ideal of maximal minors is a Cartwright-Sturmfels ideal. In particular $I_u(X_{[u]} )$ is radical, hence $$J(V)^{\hom}= I_u(X_{[u]} ) : (\prod_{i=1}^u y_i).$$  
By \cite[Theorem 1.16]{CDG2} it follows that $J(V)^{\hom}$  is a Cartwright-Sturmfels ideal as well. This concludes the proof of (1). 

(2) Since $J(V)$ is a prime ideal, then  $J(V)^{\hom}$ is prime (see  e.g.~ \cite[Proposition 4.3.10]{KR}). Then $J(V)^\star=J(V)^{\hom}\cap T$ is prime as well. One can easily check that the ideals $J(V)^{\hom}$  and $J(V)^\star$ are geometrically primes, i.e. they remain prime under field extensions.  Hence we may assume without loss of generality that $K$ is algebraically closed. So we may apply Brion's Theorem \ref{Brion} and conclude that both $J(V)^{\hom}$  and $J(V)^\star$  define  Cohen-Macaulay normal rings.   
\end{proof}
 
In order to identify  generators of  $J(V)^{\hom}$, we proceed as follows. For every non-empty subset $A$ of $\{1,\dots,u\}$ let $$V_A=V\cap \oplus_{i\in A} T_{e_i}$$  
and consider the ideal $J(V_A)$ generated by $V_A$. We associate to $J(V_A)$  the ideal $H_A$ generated the homogenization $L^{\hom,c}$ of the generators of $J(V_A)$ with respect to the vector $c=\sum_{i\in A} e_i$ and the corresponding matrices $X_A,Y_A, M_A$ constructed as in the proof of Theorem~\ref{homoline}. In the proof of Theorem~\ref{homoline} we showed that 
$$J(V_A)^{\hom}=I_{|A|}(X_A): (\prod_{i \in A}  y_i) =I_{|A|}(X_A): (\prod_{i=1}^u  y_i) $$
and, since $J(V_A) \subseteq  J(V)$, we obtain $I_{|A|}(X_A)\subseteq  J(V_A)^{\hom}\subseteq J(V)^{\hom}$. Therefore we have
$$\sum_{A\neq \emptyset}  I_{|A|}(X_A)\subseteq   J(V)^{\hom}.$$ 
We claim that equality holds. In order to prove our claim, we will need the following:

\begin{lemma}\label{techle1}
Let $J$ be a  $\ZZ^u$-graded Cartwright-Sturmfels ideal, let $F$ be a product of  $\ZZ^u$-graded linear forms. Let $J_1$ be the ideal generated by the elements of $J:(F)$ of degree smaller than $(1,\dots,1)$. Then $J:(F)=J+J_1$. 
\end{lemma}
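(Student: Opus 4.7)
The containment $J+J_1\subseteq J:(F)$ is immediate from the definitions, so I focus on the reverse inclusion. My plan proceeds in three stages: (a) reduce to the case $F=x_{j,1}$, a single variable; (b) reduce to the critical multidegree $a=\one$; (c) handle that case by lifting from the squarefree Borel-fixed monomial model via Hilbert function comparison.

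For (a), I induct on the number of linear factors of $F$: writing $F=L\cdot F'$ and using that $J:(F')$ is CS whenever $J$ is CS (by \cite[Theorem~1.16]{CDG2}), the inductive hypothesis applied to $F'$, combined with the single-linear-form case applied to $L$ and the CS ideal $J:(F')$, completes the step. A $\ZZ^u$-graded change of coordinates in the block of type $j=\deg L$ (which preserves CS-ness of $J$ and the formulation of $J_1$) then lets me take $L=x_{j,1}$. For (b), since $J:(x_{j,1})$ is CS and therefore generated in multidegrees $\le\one$, for any $a>\one$ I expand $(J:(x_{j,1}))_a=\sum_{b\le\one,\,b\le a}S_{a-b}\cdot(J:(x_{j,1}))_b$; summands with $b\neq\one$ satisfy $b\not\ge\one$, giving $(J:(x_{j,1}))_b\subseteq J_1$ directly, while the summand with $b=\one$ reduces to the critical case. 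Thus it suffices to show
\[
(J:(x_{j,1}))_{\one}\subseteq J_{\one}+\sum_{i=1}^u S_{e_i}\cdot(J:(x_{j,1}))_{\one-e_i}.
\]

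For (c), the monomial case $J=\gin(J)$ is direct: minimal generators of $J:(x_{j,1})$ outside $J$ are $m/x_{j,1}$ for minimal generators $m$ of $J$ divisible by $x_{j,1}$, and each such has multidegree $\deg m-e_j$ with $j$-th coordinate zero, hence $\not\ge\one$. To transfer this to general CS $J$, I compare Hilbert functions through the exact sequence
\[
0\to\bigl((J:x_{j,1})/J\bigr)(-e_j)\to(S/J)(-e_j)\xrightarrow{\cdot x_{j,1}}S/J\to S/(J+(x_{j,1}))\to 0
\]
and its analogue for $\gin(J)$; the CS-ness of $J$, $\gin(J)$, $J+(x_{j,1})$, and $\gin(J)+(x_{j,1})$ (the latter two by \cite[Theorem~1.16]{CDG2}) matches the four Hilbert series pairwise, reducing the critical inclusion to the monomial case by dimension count. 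The main obstacle is the Hilbert series identity $\HS(J+(x_{j,1}),z)=\HS(\gin(J)+(x_{j,1}),z)$, which requires showing $\gin(J+(x_{j,1}))=\gin(J)+(x_{j,1})$ (or its Hilbert-series consequence) --- a subtle claim about how multigraded generic initial ideals interact with the addition of a minimum-index variable.
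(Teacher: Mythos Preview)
Your reduction in (a) matches the paper's first step, and (b) is a reasonable intermediate reduction. The difficulty is entirely in (c), where there are two genuine gaps.

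First, you explicitly leave open the identity $\HS(J+(x_{j,1}))=\HS(\gin(J)+(x_{j,1}))$. This is not a minor technicality: for a term order in which $x_{j,1}$ is the \emph{largest} variable in its block (the usual Borel convention), the equality $\inid(I+(x))=\inid(I)+(x)$ fails in general, so there is no cheap route to $\gin(J+(x_{j,1}))=\gin(J)+(x_{j,1})$.

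Second, even granting the Hilbert series identity, your ``dimension count'' does not close the argument. From the exact sequence you obtain $\HS\bigl((J{:}x)/J\bigr)=\HS\bigl((\gin(J){:}x)/\gin(J)\bigr)$, and the monomial case tells you the right-hand module is generated in degrees $<\one$. But two graded modules with the same Hilbert series need not be generated in the same degrees; knowing $\dim((J{:}x)/J)_{\one}=\dim((\gin(J){:}x)/\gin(J))_{\one}$ says nothing about whether the multiplication map $\bigoplus_i S_{e_i}\otimes((J{:}x)/J)_{\one-e_i}\to ((J{:}x)/J)_{\one}$ is surjective. You would need a semicontinuity argument for the number of generators under a \emph{compatible} flat degeneration of the quotient $(J{:}x)/J$, which in turn forces you to pick a term order for which $\inid(J{:}x)=\inid(J){:}x$ and $\inid(J+(x))=\inid(J)+(x)$ simultaneously.

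The paper sidesteps all of this with a direct Gr\"obner basis argument: choose a revlex order with $x$ the \emph{smallest} variable and take a Gr\"obner basis $G_1,\dots,G_v$ of $J$. Since $J$ is CS, each $G_k$ has degree $\le\one$. The revlex property says that if $x\mid\inid(G_k)$ then $x\mid G_k$; write those as $G_k=xH_k$. Then $J{:}x=(H_1,\dots,H_w,G_{w+1},\dots,G_v)$, and each $H_k$ has degree $\le\one-e_u<\one$, so $J{:}x=J+J_1$ immediately. Note that the revlex-smallest choice is precisely what would repair both of your gaps at once (it gives $\inid(J{:}x)=\inid(J){:}x$ and $\inid(J+(x))=\inid(J)+(x)$), but once you make that choice the exact-sequence and Hilbert-series machinery becomes unnecessary.
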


\begin{proof}
By induction on the degree of $F$ and by \cite[Theorem 1.16]{CDG2}, we may assume that $F$ is a $\ZZ^u$-graded linear form, say of degree $e_u$. After a change of coordinates we may also assume that $F$ is a variable, call it $x$. We introduce a revlex term order $<$ such that $x$ is the smallest variable with respect to $<$. Let $G_1,\dots,G_v$ be a Gr\"obner basis of $J$ with respect to $<$.   Since $J$  is a Cartwright-Sturmfels ideal the $G_i$'s have degrees smaller than or equal to $(1,\dots,1)$. Some of them, say $G_1,\dots,G_w$, have a leading term divisible by $x$ and the remaining $G_{w+1},\dots,G_v$ do not. Hence $G_j=xH_j$ for  $j=1,\dots, w$. It follows that $J:x=(H_1,\dots,H_w, G_{w+1},\dots,G_v)$. Hence $J_1=(H_1,\dots ,H_w)+( G_j : \deg G_j<(1,\dots,1) )$ and  $J:x=J+J_1$. 
\end{proof} 
 
\begin{lemma}\label{techle2}
With the notation introduced above, let $F\in T$ be a $\ZZ^u$-graded polynomial of degree $a\in \ZZ^u$  with $a_u=0$.  
Assume that  $Fy_u \in H_{[u]}$, then $F\in H_A$  with  $A=[u]\setminus {u}$. 
\end{lemma}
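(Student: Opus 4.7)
The plan is to expand the hypothesis $Fy_u=\sum_{i=1}^{v}g_i\Delta_i$ in $S$ and compare coefficients of the $y$-monomials, using that (by expanding along the last column of $X_{[u]}$) each $\Delta_i=\sum_{k=1}^{u}L_{k,i}y_{\hat k}$ with $y_{\hat k}:=\prod_{j\neq k}y_j$. Writing $g_i=\sum_{b\in\NN^u}g_i^{(b)}y^b$ with $g_i^{(b)}\in T$, the $\ZZ^u$-homogeneity $\deg g_i=a+e_u-\one$ (whose $u$-th component is $0$) forces $b_u=0$ throughout, so each $g_i$ is already $y_u$-free.

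I then extract the coefficient of the monomial $y_u$ from both sides. On the left it is $F$; on the right a contribution $g_i^{(b)}L_{k,i}y^{b}y_{\hat k}$ produces the monomial $y_u$ only when $y_{\hat k}\mid y_u$, i.e., when $\{1,\ldots,u\}\setminus\{k\}\subseteq\{u\}$, which forces $u=2$ and $k=1$. Consequently, for $u\ge 3$ the right-hand coefficient vanishes, hence $F=0\in H_A$ and the lemma holds trivially in that range.

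For the remaining case $u=2$ the comparison gives $F=\sum_i\alpha_iL_{1,i}$ with $\alpha_i:=g_i^{(0,0)}\in T$ of $\ZZ^2$-degree $(a_1-1,0)$; in particular $\alpha_i$ lies in the subring $T'\subset T$ generated by the degree-$e_1$ variables. Matching the coefficients of $y_1$, which must vanish on the left, yields the auxiliary syzygy $\sum_i\alpha_iL_{2,i}=0$ in $T$. To conclude, let $z_1,\ldots,z_s$ denote the degree-$e_2$ variables of $T$ and expand $L_{2,i}=\sum_j\beta_{ij}z_j$; equating coefficients of each $z_j$ in the syzygy gives $\sum_i\alpha_i\beta_{ij}=0$ for every $j$, which places $(\alpha_i)\in N\otimes_K T'$ (using that $T'$ is $K$-flat), where $N\subset K^v$ denotes the $K$-kernel of $(c_i)\mapsto\sum_i c_iL_{2,i}$. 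The $K$-linear map $(c_i)\mapsto\sum_i c_iL_i$ identifies $N$ with $V_A$: it is injective because the $L_i$ form a $K$-basis of $V$, and the image lands in $V_A$ because the $e_2$-component of $\sum_i c_iL_i$ vanishes on $N$. Choosing a $K$-basis $(c_{il})_l$ of $N$, writing $\alpha_i=\sum_l c_{il}h_l$ with $h_l\in T'$, and setting $\tilde L_l:=\sum_i c_{il}L_i\in V_A$, one obtains $F=\sum_l h_l\tilde L_l\in J(V_A)=H_A$, as desired.

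The main obstacle is this final identification: every $T'$-linear syzygy among the $L_{2,i}$ must descend from a $K$-linear one, and the resulting $K$-syzygies must match a basis of $V_A$ via the natural $K$-isomorphism $N\cong V_A$. Both ingredients rest on the algebraic independence of the degree-$e_2$ variables in $T$ and on the definition of $V_A=V\cap\oplus_{k\neq u}T_{e_k}$.
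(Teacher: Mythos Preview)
Your proof is correct for the lemma as literally stated (with $F\in T$), but it proceeds differently from the paper. The paper first replaces $L_1,\dots,L_v$ by a basis of $V$ adapted to the projection $\pi_u\colon V\to T_{e_u}$: elements $L_1,\dots,L_h$ with $K$-independent images $W_i=\pi_u(L_i)$, together with $U_1,\dots,U_k\in\ker\pi_u=V_A$. Since $U_j^{\hom,\one}=y_u\,U_j^{\hom,\one-e_u}$, writing $Fy_u$ in this basis and separating the $y_u$-divisible part from the rest immediately forces the coefficients of the $L_i^{\hom,\one}$ to vanish (by the $K$-independence of $W_1,\dots,W_h,y_u$ in $S_{e_u}$) and leaves $F\in H_A$. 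You instead keep the original basis, match the coefficients of the individual $y$-monomials $y_u$ and $y_1$, and reconstruct $V_A$ a posteriori as the $K$-syzygy module $N$ of the $L_{2,i}$.

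One caveat is worth flagging. The hypothesis $F\in T$ seems to be a slip: under it the case $u\ge 3$ is indeed vacuous (as you found, each $\Delta_i$ has total $y$-degree $u-1$ while $Fy_u$ has $y$-degree $1$, so $Fy_u\in H_{[u]}$ forces $F=0$), and in the proof of Theorem~\ref{determinant} the lemma is applied to $F=G(y_1\cdots y_{u-1})^2\in S\setminus T$. The paper's argument works verbatim for $F\in S$ and all $u\ge 2$; your monomial-matching does not, because for $F\in S$ the coefficient of the monomial $y_u$ on the left is only the $y$-free part of $F$, not $F$ itself. Your approach can, however, be upgraded to that generality: compare instead the coefficients of $y_u^0$ and $y_u^1$ (viewing both sides as polynomials in $y_u$ over the $y_u$-free subring of $S$). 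This yields $\sum_i g_i L_{u,i}=0$ and $F=\sum_i g_i\Delta_i'$ with $\Delta_i'=\sum_{k<u}L_{k,i}\prod_{j\ne k,u}y_j$, after which your identification $N\cong V_A$ goes through unchanged and gives $F\in H_A$ for every $u$.
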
 

\begin{proof}  
Let $\pi_u: \oplus_{i=1}^u  T_{e_i} \to T_{e_u}$ be the projection on the $u$-th homogeneous component. We may choose a basis of $V$ of the form $L_1,\dots, L_h, U_1,\dots, U_k$ so that   $\pi_u(L_1),\dots, \pi_u(L_h)$ form a $K$-basis of $\pi_u(V)$ and $\pi_u(U_i)=0$ for every $i$. By construction, $H_{[u]}$ is generated by the homogenization with respect to the vector $\one$ of $L_1,\dots, L_h,U_1,\dots, U_k$.
Notice that for every $i=1,\dots h$ one has that $L_i^{\hom, \one}= W_i(y_1\cdots y_{u-1})+y_uW_i'$, where $W_i=\pi_u(L_i)$ and 
$W_i'$ is homogeneous of degree $\one-e_u$. Moreover, the homogenization with respect to the vector $\one$ of $U_1,\dots, U_k$ generates $y_uH_A$, with $A=[u]-\{u\}$.  Since $Fy_u$ is in $H_{[u]}$, then
$$Fy_u=\sum_{i=1}^h E_i  (W_iy_i\cdots y_{u-1}+y_uW_i' )+ y_uC $$
where $C\in H_A$ and the $E_i$'s are $\ZZ^u$-homogeneous with the homogeneous component of degree $e_u$ equal to $0$. 
 Since $W_1,\dots ,W_h, y_u$ are linearly independent elements of degree $e_u$, it follows that $E_i=0$ for every $i$ and $F=C\in J_A$. 
\end{proof}

We can now give an explicit description of $J(V)^{\hom}$ and $J(V)^{\star}$ as sums of ideals of minors.
\begin{thm}\label{determinant}
With the notations above one has:  
$$J(V)^{\hom}= \sum_{A\neq \emptyset}  I_{|A|}(X_A)$$ 
and 
$$J(V)^{\star}= \sum_{A\neq \emptyset}  I_{|A|}(M_A).$$ 
\end{thm}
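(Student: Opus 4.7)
The containment $\sum_{A \neq \emptyset} I_{|A|}(X_A) \subseteq J(V)^{\hom}$ was established in the discussion preceding the theorem, so the plan is to prove the reverse inclusion by induction on $u$. The base case $u=1$ is immediate: the only nonempty subset is $A=\{1\}$, the matrix $X_{\{1\}}$ coincides with $M_{\{1\}}$, and $J(V)^{\hom}=J(V)=I_1(M_{\{1\}})$.

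For the inductive step, my first move is to isolate the multigraded elements of $J(V)^{\hom}$ of degree strictly smaller than $\one$. By Theorem~\ref{homoline}, $J(V)^{\hom}=I_u(X_{[u]}):(\prod_{i=1}^u y_i)$, and $I_u(X_{[u]})$ is Cartwright-Sturmfels as the ideal of maximal minors of the row-graded matrix $X_{[u]}$; therefore Lemma~\ref{techle1} applies and yields
$$J(V)^{\hom}=I_u(X_{[u]})+J_1,$$
where $J_1$ is generated by the multigraded elements of $J(V)^{\hom}$ of degree strictly smaller than $\one$. Since $I_u(X_{[u]})$ is the $A=[u]$ summand, it remains to show $J_1\subseteq \sum_A I_{|A|}(X_A)$.

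The heart of the inductive step is the claim that any multigraded generator $F$ of $J_1$ of degree $a$ with $a_u=0$ (after possibly renaming indices) lies in $J(V_B)^{\hom}$, where $B=[u]\setminus\{u\}$ and $J(V_B)^{\hom}$ is regarded as an ideal of $S_B=T_B[y_i:i\in B]$ with $T_B=K[x_i:\deg x_i\neq e_u]$. My plan for the claim is: from $F\in J(V)^{\hom}$ we have $F\prod_{i=1}^u y_i\in I_u(X_{[u]})$, and the straightening containment $I_{u-1}(Y_{[u]})\cdot I_u(X_{[u]})\subseteq H_{[u]}$ used in the proof of Theorem~\ref{homoline}, applied with the $(u-1)$-minor $\prod_{i\in B} y_i$ of $Y_{[u]}$ (obtained by deleting the bottom row of $Y_{[u]}$, which leaves a lower-triangular submatrix with diagonal $y_1,\dots,y_{u-1}$), produces $Fy_u(\prod_{i\in B} y_i)^2\in H_{[u]}$. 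Setting $G=F(\prod_{i\in B} y_i)^2$, the multidegree of $G$ still has $u$-th coordinate $0$, so Lemma~\ref{techle2} applied to $G$ (noting $Gy_u\in H_{[u]}$) gives $G\in H_B$, and hence $F\in H_B:(\prod_{i\in B} y_i)^2\subseteq J(V_B)^{\hom}$. Because $V_B\cap\bigoplus_{i\in A}T_{e_i}=V_A$ for every $A\subseteq B$, the matrices $X_A$ constructed from $V_B$ coincide with those from $V$, so the induction hypothesis applied to $V_B\subseteq T_B$ yields $J(V_B)^{\hom}=\sum_{A\subseteq B,\,A\neq\emptyset}I_{|A|}(X_A)$, placing $F$ in $\sum_{A\subseteq[u]}I_{|A|}(X_A)$.

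The main obstacle is the judicious multiplication by the $(u-1)$-minor $\prod_{i\in B}y_i$ of $Y_{[u]}$: the hypothesis $F\in J(V)^{\hom}$ a priori only gives $F\prod y_i\in I_u(X_{[u]})$, which by itself is too weak to invoke Lemma~\ref{techle2}, and it is this straightening-type amplification that upgrades the containment into $Gy_u\in H_{[u]}$ with $G$ of the right multidegree. For the second identity, $J(V)^{\star}=J(V)^{\hom}\cap T$ by \cite[Tutorial 50]{KR}, so the plan is to intersect the just-established decomposition of $J(V)^{\hom}$ with $T$: for each $A$, $I_{|A|}(X_A)\cap T=I_{|A|}(M_A)$, because specializing all $y_i$ to $0$ kills every column of $Y_A$, so among the maximal minors of $X_A=(Y_A\mid M_A)$ only those using exclusively columns of $M_A$ survive---these being precisely the maximal minors of $M_A$. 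Summing over $A$ yields $J(V)^{\star}=\sum_{A\neq\emptyset}I_{|A|}(M_A)$.
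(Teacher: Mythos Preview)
Your proof is correct and follows essentially the same route as the paper's: the induction via Lemma~\ref{techle1} reduces to handling a multigraded element $F$ with some coordinate of its degree equal to zero, the straightening inclusion $I_{u-1}(Y_{[u]})I_u(X_{[u]})\subseteq H_{[u]}$ (with the specific $(u-1)$-minor $\prod_{i\in B}y_i$) bootstraps $F\prod_i y_i\in I_u(X_{[u]})$ to $F(\prod_{i\in B}y_i)^2y_u\in H_{[u]}$, and Lemma~\ref{techle2} then places $F$ in $H_B:(\prod y_i)^\infty=J(V_B)^{\hom}$, which by induction equals the appropriate sub-sum of determinantal ideals. The only cosmetic difference is that the paper packages the induction via the intermediate ideal $W=\sum_j\bigl(I_{u-1}(X_{[u]\setminus\{j\}}):(\prod_i y_i)\bigr)$, whereas you invoke the hypothesis directly on $V_B\subseteq T_B$; your observation that $(V_B)_A=V_A$ for $A\subseteq B$ is exactly what makes these two formulations match. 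For the second identity the paper likewise passes to $T$ by setting the $y_i$ to zero, using (as you implicitly do) that each $I_{|A|}(X_A)$ is homogeneous for the auxiliary $\ZZ$-grading with $\deg y_i=1$ and $\deg x_j=0$, so that intersecting the sum with $T$ distributes over the summands.
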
 

\begin{proof} 
In order to prove the first statement, by the proof of Theorem \ref{homoline} it suffices to show that
$$I_{u}(X_{[u]}):(\prod_{i=1}^u y_i)=\sum_{A\neq \emptyset}  I_{|A|}(X_A).$$
By induction, it suffices to prove that
$$I_{u}(X_{[u]}):(\prod_{i=1}^u y_i)= I_{u}(X_{[u]})+W$$
where 
$$W=\sum_{j=1}^u \left(  I_{u-1}(X_{[u]\setminus \{j\}}):(\prod_{i=1}^u y_i)  \right).$$
 
By Lemma~\ref{techle1} it suffices to show that any $\ZZ^u$-graded element $G\in I_{u}(X_{[u]}):(\prod_{i=1}^u y_i)$ of degree smaller than $(1,\dots,1)$ is in $W$. We may assume that $G$ has degree $0$ in the $u$-th coordinate. 
In the proof of Theorem~\ref{homoline} we observed that every squarefree monomial of degree $u-1$ in the $y_i$'s multiplies $I_u(X_{[u]})$ in the ideal $H_{[u]} =(L_1^{\hom,\one},  \dots, L_v^{\hom,\one})$. Since by assumption $Gy_1\cdots y_u\in  I_u(X_{[u]})$, then 
$$G(y_1\cdots y_{u-1})^2y_u \in  H_{[u]}.$$
Notice that the polynomial $F=G(y_1\cdots y_{u-1})^2$ is $\ZZ^u$-graded and has degree $0$ in the last coordinate and $Fy_u\in H_{[u]}$.  It follows from Lemma~\ref{techle2} that  $F\in H_A$ with $A=[u]\setminus\{u\}$. Hence $G\in H_A:(\prod_{i=1}^u y_i)^\infty=I_{|A|}(X_A):(\prod_{i=1}^u y_i)\subseteq  W$.  

The second statement may be deduced from the first as follows. One observes that   $J(V)^{\hom}$ is homogeneous with respect to the  $\ZZ$-graded structure induced by assigning degree $1$ to the $y_i$'s and degree $0$ to the elements of $T$. Since  $J(V)^{\star}=J(V)^{\hom} \cap T$, then $J(V)^{\star}$ is obtained from $J(V)^{\hom}$ by setting to $0$ the $y_i$'s. 
\end{proof}

In the next example we illustrate Theorems \ref{homoline} and \ref{determinant} and their proofs.
We consider  the linear space discussed in \cite[Example 1.7]{AB} and we homogenize with respect to a different multigrading.
\begin{ex}
\label{esempio}
Let $T=K[x_1,\ldots,x_6]$ with the $\ZZ^3$-graded structure induced by 
$$\deg(x_i)=\left\{\begin{array}{ll} 
e_1 &  \mbox{for }i=1,2,\\
e_2 &  \mbox{for }i=3,\\
e_3 &  \mbox{for }i=4,5,6.
\end{array}
\right.
$$
Let $V=\langle L_1,L_2,L_3 \rangle$ with $L_1=x_1+x_2+x_6,\ L_2=x_2-x_3+x_5,\ L_3=x_3+x_4$. 

So one has $u=v=3$ and
$$X_{\{1,2,3\}}=\left(
\begin{array}{ccccc}
y_1 & 0 & x_1+x_2 & x_2 & 0 \\
-y_2 & y_2 & 0 & -x_3& x_3 \\
0 & -y_3 & x_6 & x_5  & x_4
\end{array}
\right).
$$

In Theorem \ref{homoline} we proved that $J(V)^{\hom}$ is a CS ideal and $$J(V)^{\hom}=I_3(X_{\{1,2,3\}}):(y_1y_2y_3).$$ 
In order to obtain the generators of $J(V)^{\hom}$, we use Theorem \ref{determinant}.  
The relevant subsets $A\subseteq  \{1,2,3\}$ are those for which $V_A\neq \{0\}$, hence in this case they correspond to 
$$V_{\{1,3\}}=\langle L_1, L_2+L_3 \rangle, \quad  V_{\{2,3\}}=\langle L_3 \rangle$$ 
and of course $V_{\{1,2,3\}}=V$. 
The corresponding matrices are
$$
X_{\{1,3\}}=\left(
\begin{array}{ccccc}
y_1 &  x_1+x_2 &x_2 \\
 -y_3 & x_6 & x_4+x_5
\end{array}
\right), \ 
X_{\{2,3\}}=\left(
\begin{array}{ccccc}
y_2 & x_3 \\
-y_3 & x_4
\end{array}
\right)
$$
thus by Theorem \ref{determinant} we have
$$J(V)^{\hom}=I_2(X_{\{1,3\}})+I_2(X_{\{2,3\}})+I_3(X_{\{1,2,3\}}).$$
It turns out that the generators of $I_3(X_{\{1,2,3\}})$ are superfluous, so that
$$J(V)^{\hom} 
=(x_4y_2 +x_3y_3,  \ x_6y_1 +x_1y_3 +x_2y_3, \  x_4y_1 +x_5y_1 +x_2y_3, \  x_1x_4 +x_2x_4 +x_1x_5 +x_2x_5 -x_2x_6).$$
Finally by Theorem \ref{determinant} we have $J(V)^\star=I_2(M_{\{1,3\}})+I_2(M_{\{2,3\}})+I_3(M_{\{1,2,3\}})$, with
$$
M_{\{1,3\}}=\left(
\begin{array}{cc}
  x_1+x_2 &x_2 \\
  x_6 & x_4+x_5
\end{array}
\right), \ 
M_{\{2,3\}}=\left(
\begin{array}{c}
 x_3 \\
 x_4
\end{array}
\right), \ 
M_{\{1,2,3\}}=\left(
\begin{array}{ccc}
 x_1+x_2 & x_2 & 0 \\
 0 & -x_3& x_3 \\
 x_6 & x_5  & x_4
\end{array}
\right),
$$
so that  one gets $J(V)^\star=(\det M_{\{1,3\}})$.
\end{ex}

In \cite{AB} Ardila and Boocher consider $T=K[x_1,\dots, x_n]$ with the standard $\ZZ^n$-graded structure  induced  by $\deg x_i=e_i\in \ZZ^n$.
In this setting they prove, among other things, that all the initial ideals of $J(V)^{\hom}$ (in the given coordinates) are squarefree. Moreover, the Betti numbers of $J(V)^{\hom}$ and $\inid(J(V)^{\hom})$ coincide. We can recover and generalize these results as follows.  

\begin{thm}\label{AB1}
Let $T=K[x_1,\dots, x_n]$ be a polynomial ring with the  standard $\ZZ^n$-graded  structure. Let  $V$ be a 
vector space of linear forms of $T$ (i.e. elements of  degree $1$ with respect to the standard $\ZZ$-graded structure) and let $J(V)$ be the ideal generated by $V$.  
Then  $J(V)^{\hom}\subset S=K[x_1,\dots, x_n, y_1,\dots, y_n]$ is a Cartwright-Sturmfels as well as a 
Cartwright-Sturmfels$^*$ ideal. Furthermore every ideal $H$ of $S$ with the same $\ZZ^n$-graded Hilbert function as $J(V)^{\hom}$ is radical, Cohen-Macaulay and $\beta_{i,a}(H)=\beta_{i,a}(J(V)^{\hom})$ for every $i\in \NN$ and $a\in \NN^n$. 
\end{thm}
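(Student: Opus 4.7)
The CS part is immediate from Theorem~\ref{homoline}(1): the standard $\ZZ^n$-grading on $T=K[x_1,\dots,x_n]$ is the special case $u=n$, $a_1=\cdots=a_n=1$ of the $\ZZ^u$-graded setting there, so $J(V)^{\hom}\subset S$ is already Cartwright--Sturmfels.

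To establish the CS$^*$ property the plan is to exhibit a monomial ideal $J_{\M}\subset K[x_1,\dots,x_n]$ whose extension to $S$ has the same $\ZZ^n$-graded Hilbert series as $J(V)^{\hom}$, since by Definition~\ref{defCS+} this is precisely the CS$^*$ condition. The natural candidate is the squarefree monomial ideal
\[
J_{\M}=(\,x^C \;:\; C\subseteq[n]\text{ is a minimal support of a nonzero element of }V\,),
\]
that is, the Stanley--Reisner ideal of the independence complex of the matroid $\M(V)$ of $V$ on $[n]$. Both $J(V)^{\hom}$ and $J_{\M}S$ define Cohen--Macaulay rings: the former by Brion's Theorem~\ref{Brion} applied to the prime CS ideal $J(V)^{\hom}$ (prime because $J(V)$ is generated by linear forms, so $T/J(V)$ is a polynomial ring, and primality is preserved under $\ZZ^n$-homogenization), and the latter because matroid complexes are shellable. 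Since the $\ZZ^n$-graded Hilbert series of a Cohen--Macaulay $\ZZ^n$-graded module is determined by its multidegree, it suffices to match multidegrees. Corollary~\ref{flachi2} expresses the multidegree of $S/J(V)^{\hom}$ as a sum of multidegrees of the minimal primes of $\gin(J(V)^{\hom})$, and a direct calculation identifies this sum with the standard matroid multidegree $\sum_B z^{[n]\setminus B}$ indexed by the bases $B$ of $\M(V)$, which is also the multidegree of $S/J_{\M}S$. Equality of Hilbert series and Definition~\ref{defCS+} then give CS$^*$.

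A backup, more hands-on route to CS$^*$ is to run a Gr\"obner basis computation: apply the multigraded upper triangular coordinate change $\phi$ with $\phi(x_i)=x_i$ and $\phi(y_i)=y_i+\alpha_i x_i$ for generic $\alpha_i\in K$, and use lex with $x_1>\cdots>x_n>y_1>\cdots>y_n$. For each $L=\sum_{i\in A}\lambda_i x_i\in V$ the leading term of $\phi(L^{\hom,\one})$ is a nonzero scalar multiple of $\prod_{i\in A}x_i$; letting $L$ range over elements of $V$ whose supports are circuits of $\M(V)$, one verifies by an $S$-pair reduction in the spirit of the proof of Theorem~\ref{mainbinedge} that these form a Gr\"obner basis of $\phi(J(V)^{\hom})$, whose initial ideal is exactly $J_{\M}S$.

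For the ``furthermore'' statement let $H\subset S$ be any $\ZZ^n$-graded ideal with $\HS(H,z)=\HS(J(V)^{\hom},z)$. By Definitions~\ref{defCS} and~\ref{defCS+} both the CS and the CS$^*$ properties depend only on the $\ZZ^n$-graded Hilbert series, so $H$ inherits both. Radicality follows since CS ideals are radical; the equality $\beta_{i,a}(H)=\beta_{i,a}(J(V)^{\hom})$ follows from CS$^*$ and \cite[Proposition~1.9]{CDG2}; and Cohen--Macaulayness of $S/H$ is part (1) of the corollary following Theorem~\ref{Brion} applied to the prime CS ideal $J(V)^{\hom}$. The main obstacle is the CS$^*$ identification in the second paragraph: CS comes essentially for free from the determinantal setup of Theorem~\ref{homoline}, whereas CS$^*$ requires pinning down the matroid structure inside $\gin(J(V)^{\hom})$, either through the multidegree/Cohen--Macaulay comparison with $J_{\M}$ or through the explicit Gr\"obner basis computation sketched above.
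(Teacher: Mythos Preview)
Your CS argument and the ``furthermore'' deductions match the paper. The divergence is in the CS$^*$ step, where the paper's argument is much shorter than either of your routes. The paper observes that $J(V)^{\hom}$ is a $\ZZ^n$-graded prime ideal (not containing any $S_{e_i}$, since $y_i\notin J(V)^{\hom}$), so the last variable of each block is absent from $G=\gin(J(V)^{\hom})$. Here each block has only the two variables $x_i,y_i$; hence $G$ is generated by monomials in $x_1,\dots,x_n$ alone, and $J(V)^{\hom}$ is CS$^*$ directly from Definition~\ref{defCS+}. No matroid identification, multidegree comparison, or Gr\"obner computation is needed.

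Your first route to CS$^*$ has a genuine gap. The assertion that the $\ZZ^n$-graded Hilbert series of a Cohen--Macaulay module is determined by its multidegree is false: already in the $\ZZ$-graded case, $K[x,y]/(x^2,y^2)$ and $K[x,y]/(x,y^4)$ are both Artinian of multiplicity $4$, hence share the multidegree $4t^2$, yet have Hilbert series $1+2t+t^2$ and $1+t+t^2+t^3$. What \emph{is} true (and would salvage the argument) is that two equidimensional CS ideals with the same multidegree have the same $\gin$ by Lemma~\ref{supportgin} and hence the same Hilbert series; but you still need the multidegree of $S/J(V)^{\hom}$ to equal $\sum_B z^{[n]\setminus B}$, and your appeal to Corollary~\ref{flachi2} does not give this: that corollary decomposes over the minimal primes of $I$, and $J(V)^{\hom}$ is already prime. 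The identity you want is Theorem~\ref{multideg}, established later by a separate geometric argument.

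Your second route is plausible in outline but not a proof as stated. The elements you should be transforming are the $L^{\hom}$, not the $L^{\hom,\one}$ (the latter carry an extra factor $\prod_{j\notin A}y_j$, which under $\phi$ pushes the leading monomial to $\prod_{j\in[n]}x_j$ regardless of $A$). With that correction the leading terms are indeed the circuit monomials, but ``an $S$-pair reduction in the spirit of Theorem~\ref{mainbinedge}'' is an assertion, not a verification; the combinatorics here is different from the binomial edge case and would need to be carried out.
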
 

\begin{proof} 
We proved that  $J(V)^{\hom}$ is a Cartwright-Sturmfels ideal. Hence every ideal $H$ of  $S$ with the same $\ZZ^n$-graded Hilbert function as $J(V)^{\hom}$ is radical. Let $G=\gin(J(V)^{\hom})$ be the multigraded generic initial ideal of $J(V)^{\hom}$. Since  $J(V)^{\hom}$  is a $\ZZ^n$-graded prime ideal,  the largest variable of each block does not appear in the generators of  $G$. Since we have only two variables in each block it follows that the generators of $G$ involve only one variable per block, hence $J(V)^{\hom}$ is a Cartwright-Sturmfels$^*$ ideal. Hence every ideal $H$ of $S$ with the $\ZZ^n$-graded Hilbert function of $J(V)^{\hom}$  satisfies  $\beta_{i,a}(H)=\beta_{i,a}(J(V)^{\hom})$ for every $i\in \NN$ and $a\in \NN^n$.  The Cohen-Macaulay property follows from Brion's Theorem~\ref{Brion}.\end{proof} 
 
Ardila and Boocher in~\cite{AB} computed the multidegree of $S/J(V)^{\hom}$ in their setting. We are able to compute the multidegree in  general.
 
Let  $T=K[x_1,\ldots,x_n]$ with any $\ZZ^u$-graded structure. Let $V=\langle L_1,\ldots,L_v \rangle$ and consider the $v\times n$ matrix $M_V$, whose $(i,j)$-entry  is the coefficient of $x_j$ in $L_i$.  To $M_V$ we associate the  matroid  $\M_V$, whose elements are the subsets of $[n]$ corresponding to linearly independent columns of $M_V$. A basis of $\M_V$ is a maximal element, i.e., a set of column indices  $\{b_1,\ldots,b_v\}$ corresponding to a basis of the column space of $M_V$. 
To every basis $b=\{b_1,\ldots,b_v\}$ we associate a multidegree $\deg(b)=\deg(x_{b_1}\cdots x_{b_r})\in\ZZ^u$ and let
$$D_V=\{ \deg(b) :  b \mbox{ is a basis of }\M_V \}.$$ 
With this notation we have:

\begin{thm}\label{multideg}
Let  $R=S/J(V)^{\hom}$. The multidegree of $R$ is given by the formula: 
$$\MDeg_{R}(z)=\sum_{w \in D_V} z^{w}.$$ 
\end{thm}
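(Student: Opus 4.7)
The plan is to combine the Cartwright-Sturmfels property with an intersection-theoretic computation. Since $J(V)^{\hom}$ is a prime Cartwright-Sturmfels ideal by Theorem \ref{homoline}, Proposition \ref{flachi1}(2) gives $\MDeg_R(z)=\GDeg_R(z)$, and this polynomial is multiplicity-free because every Cartwright-Sturmfels ideal has multiplicity-free G-multidegree. Hence it suffices to show that the coefficient $e_R(w)$ of $z^w$ in $\MDeg_R(z)$ equals $1$ if $w\in D_V$ and $0$ otherwise. We may assume $K$ is algebraically closed.

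By the geometric interpretation of multidegrees recalled in Section \ref{S1}, for $w$ with $|w|=v$ the coefficient $e_R(w)$ equals the number of intersection points of $\tilde L$ with $\Lambda_1\times\cdots\times\Lambda_u$, where each $\Lambda_i\subseteq \PP^{a_i}$ is a generic linear subspace of dimension $w_i$. Each $\tilde L\cap\{y_i=0\}$ is a proper closed subvariety of $\tilde L$; hence, by Kleiman's transversality theorem applied to the natural action of $\prod_i\GL(a_i+1)$ on $\prod_i \PP^{a_i}$, for generic $\Lambda_i$'s the intersection is transverse and contained in the open chart $\AAA^n = \bigcap_i\{y_i\neq 0\}$. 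In this chart $\tilde L$ restricts to $L$ and each $\Lambda_i$ restricts to a generic $w_i$-dimensional affine subspace $\Lambda_i'\subseteq \AAA^{a_i}$ cut out by $a_i-w_i$ generic affine linear forms in the $x$-variables of degree $e_i$. Thus $e_R(w)$ counts solutions in $\AAA^n$ of an $n\times n$ affine linear system: the $v$ equations $L_1=\cdots =L_v=0$ together with $\sum_i(a_i-w_i) = n-v$ additional generic affine equations with block-constrained supports.

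The coefficient matrix of this system has the form
$$N \;=\; \begin{pmatrix} M_V \\ F \end{pmatrix},$$
where $F$ is a generic $(n-v)\times n$ matrix whose rows have block support: exactly $a_i - w_i$ rows are supported on the columns $I_i$ corresponding to variables of degree $e_i$. By Laplace expansion along the bottom block,
$$\det N \;=\; \sum_{\substack{S\subseteq [n] \\ |S|=v}} \pm\, \det M_V(S)\,\det F([n]\setminus S).$$
For generic $F$ with the prescribed block support, $\det F(T)\neq 0$ exactly when $|T\cap I_i|=a_i-w_i$ for every $i$, equivalently when $S=[n]\setminus T$ satisfies $|S\cap I_i|=w_i$ for every $i$. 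Moreover, the polynomials $\det F(T)$ for distinct $T$'s of size $n-v$ have pairwise disjoint monomial supports in the entries of $F$, since every monomial of $\det F(T)$ uses each column in $T$ exactly once. Hence no cancellation can occur between distinct terms of the Laplace sum, and $\det N$ is nonzero for generic $F$ if and only if some $S$ with $|S\cap I_i|=w_i$ satisfies $\det M_V(S)\ne 0$, i.e.\ if and only if such an $S$ is a basis of $\M_V$, which is precisely the condition $w\in D_V$. Since the system is $n\times n$ and linear, it has at most one solution; thus $e_R(w)=1$ for $w\in D_V$ and $e_R(w)=0$ otherwise.

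The main technical point is the transversality assertion ensuring that affine intersections recover the projective intersection count with multiplicity one; the subtlety is that one needs transversality with respect to the restricted family $\prod_i \mathrm{Gr}(w_i, a_i+1)$ of linear subspaces rather than a single Grassmannian, but this follows from the transitivity of the $\prod_i \GL(a_i+1)$-action. A purely algebraic alternative would be to exploit the saturation presentation $J(V)^{\hom}=I_u(X_{[u]}):(\prod y_i)^\infty$ of Theorem \ref{homoline}, computing the multidegree of the row-graded maximal-minors ideal $I_u(X_{[u]})$ via the CS machinery of \cite{CDG1,CDG2} and then subtracting the contributions of the minimal primes containing some $y_i$.
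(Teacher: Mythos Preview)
Your proof is correct and follows essentially the same approach as the paper: interpret the coefficients of $\MDeg_R(z)$ as intersection numbers of $\overline{W}$ with products of generic linear subspaces, argue that the intersection avoids the hyperplanes $y_i=0$, and reduce to a linear-algebra computation in the affine chart governed by the matroid $\M_V$. The differences are only in packaging: you invoke Kleiman's transversality and the CS multiplicity-freeness where the paper uses a bare dimension count, and you spell out the Laplace expansion and the disjoint-support non-cancellation argument where the paper simply asserts that the relevant determinant is a generic linear combination of the maximal minors of $M_V$ of multidegree $c$.
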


\begin{proof}
If $W\subset\AAA^n$ is the affine $(n-v)$-dimensional linear space corresponding to $J(V)$, then $J(V)^{\hom}$ corresponds to the closure $\overline{W}$ of $W$ in the product of projective spaces $\PP^{n_1}\times\ldots\times\PP^{n_u}$, where $n_i=\dim_K T_{e_i}$, $1\leq i\leq u$. By the results we have recalled in Section \ref{S1} we have: 
$$\MDeg_{R}(z)=\sum \deg(H_{c}\cap\overline{W}) z^{c}$$ 
where the sum runs over the $c=(c_1,\ldots,c_u)\in \NN^u$  with $c_i\leq n_i$ and  $|c|=v$. Moreover, $H_{c}=W_1\times\ldots \times W_u$ and $W_i \subseteq\PP^{n_i}$ is a generic linear subspace with $\dim W_i=c_i$. Here $\deg(H_{c}\cap\overline{W}) $ denotes the usual intersection multiplicity of $H_{c}$ and $\overline{W}$.

We claim that the intersection of $H_{c}$ and $\overline{W}$ is affine (for a generic choice of $H_{c}$). In fact, since $\overline{W}$ is irreducible and not contained in the hyperplane $H_i$ of equation $y_i=0$ for any $i$, then $\dim(\overline{W}\cap H_i)=n-v-1$. Therefore, a generic  $H_{c}$ has empty intersection with $\overline{W}\cap H_i$, since $\dim H_{c}+\dim (\overline{W}\cap H_i)=n-1<n$. Finally, since there are $u$ hyperplanes $H_i$, the intersection of a generic $H_{c}$ with $\overline{W}$ avoids them all.

Since the intersection of $H_{c}$ and $\overline{W}$ is affine, then it  corresponds to the intersection of $W$ with $W_1\times\ldots\times W_u \subset \AAA^{n_1}\times\ldots\times\AAA^{n_u}=\AAA^n$  where $W_i$ is a generic linear subspace of $\AAA^{n_i}$ that contains the origin.   

Therefore, the defining equations of the affine part of $H_{c}$ are general elements  $\ell_{i,j}\in T_{e_i}$ with $1\leq i\leq u$ and $1\leq j \leq n_i-c_i$. 
In particular, $m(H_{c},\overline{W})\in\{0,1\}$ and it is $1$   if and only if  the $\ell_{i,j}$'s and the $L_i$'s are linearly independent.
The associated determinant is the linear combination the maximal minors of $M_V$ corresponding to bases of $\M_V$ of multidegree $c$ whose coefficients are generic. Hence $m(H_{c},\overline{W})=1$ if and only if $c\in D_V$. 

\end{proof}

We illustrate Theorem \ref{multideg} by considering again Example \ref{esempio}.  

\begin{ex}
Let $T=K[x_1,\ldots,x_6]$ with the same $\ZZ^3$-graded structure as in Example \ref{esempio}, i.e., let
$$\deg(x_i)=\left\{\begin{array}{ll} 
e_1 &  \mbox{for }i=1,2,\\
e_2 &  \mbox{for }i=3,\\
e_3 &  \mbox{for }i=4,5,6.
\end{array}
\right.
$$
Let $V=\langle L_1,L_2,L_3 \rangle$ with $L_1=x_1+x_2+x_6,\ L_2=x_2-x_3+x_5,\ L_3=x_3+x_4$, and let $R=S/J(V)^{\hom}$. The matrix associated to $V$ is
$$M_V=
\left(\begin{array}{cccccc} 
1 & 1& 0& 0 & 0 & 1\\
0 & 1& -1& 0 & 1 & 0\\
0 & 0& 1& 1 & 0 & 0
\end{array}
\right).
$$
The set of bases of the matroid $\M_V$  is
$$\{123,124,134,135,145,234,235,236,245,246,346,356,456\}.$$
To every basis we associate a multidegree and a monomial in $K[z_1,z_2,z_3]$, for example $123$ corresponds to the degree $\deg(x_1x_2x_3)=(2,1,0)$ and to the monomial $z_1^2z_2$.
The set of the degrees of the bases of $\M_V$ is 
$$D_V=\{ (2,1,0), (2,0,1), (1,1,1), (1,0,2), (0,1,2), (0,0,3) \},$$
so that by Theorem \ref{AB1} one has
$$\MDeg_{R}(z)= z_1^2z_2+z_1^2z_3+z_1z_2z_3+z_1z_3^2+z_2z_3^2+z_3^3.$$
Each monomial in the multidegree corresponds to a minimal prime of the generic initial ideal, e.g $(2,1,0)$ corresponds to the ideal generated by the first  $2$ variables of the  first block and the first  variable of the second block. Hence  the multigraded generic initial ideal of $J(V)$ is the intersection of the $6$ components: 
$$\begin{array}{ccccccc}
(2,1,0) & \to & (x_1,x_2,  x_3)  & \quad  &
(2,0,1)& \to & (x_1,x_2,  x_4)\\
(1,1,1)& \to & (x_1  ,x_3,  x_4) & \quad  &
(1,0,2)& \to & (x_1,x_4,x_5)\\
(0,1,2) & \to & (x_3,x_4,x_5) &  &
(0,0,3) & \to & (x_4,x_5,x_6)
\end{array}
$$
 i.e $\gin(J(V))=(x_1x_4, x_2x_4, x_3x_4, x_1x_5, x_2x_3x_5, x_1x_3x_6)$. 
\end{ex}

\section{Multiview ideals} 
\label{S4}

In this section, we turn our attention to multiview ideals. Consider a collection of matrices with scalar entries $A=\{A_i\}_{i=1,\dots,m}$ with $A_i$ of size $d_i\times n$ and $\rank A_i=d_i$.  One has an induced  rational map $$\phi_A: \PP^{n-1} \dashrightarrow  \prod \PP^{d_i-1}$$ sending $x\in \PP^{n-1}$ to $(A_ix)_{i=1,\dots, m}$. 
The ideal $J_A$ associated to the closure of the image of $\phi_A$ is called multiview ideal. We refer to \cite{AST} for a discussion of the role played by $J_A$ in various aspects of geometrical computer vision. Our goal is proving the 

\begin{thm}\label{bingthm}
For all   choices of  $A=\{A_i\}_{i=1,\dots,m}$ the multiview ideal  $J_A$ is a CS ideal and it defines a Cohen-Macaulay normal domain. 
\end{thm}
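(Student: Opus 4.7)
The plan is to identify the multiview ideal $J_A$ with the ``star'' ideal $J(V)^\star$ of a suitable vector space of linear forms, and then invoke Theorem~\ref{homoline}.

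First, I will set up the linear space. Consider the linear map $\Phi\colon K^n \to K^{d_1}\times\cdots\times K^{d_m}$ given by $\Phi(x)=(A_1 x,\ldots,A_m x)$, and let $L=\Phi(K^n)$, a linear subspace. Let $T=K[y_{i,j} : i\in[m],\, j\in[d_i]]$ be equipped with the $\ZZ^m$-grading $\deg y_{i,j}=e_i$, and let $V\subset T$ be the $K$-vector space of linear forms (in the $\ZZ$-graded sense) vanishing on $L$. Then $J(V)=(V)$ is the (non-$\ZZ^m$-graded) prime ideal of $L$ in $\prod_i K^{d_i}$. Theorem~\ref{homoline} applied to this $V$ already gives that both $J(V)^{\hom}$ and $J(V)^{\star}$ are Cartwright--Sturmfels ideals defining Cohen--Macaulay normal rings.

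The heart of the argument is then to show $J_A = J(V)^{\star}$. Since $J(V)^{\star}$ is, by definition, the largest $\ZZ^m$-graded ideal contained in $J(V)$, its zero set in $\prod_i K^{d_i}$ is the smallest $(K^*)^m$-stable closed subset containing $L$, namely the orbit closure $\overline{(K^*)^m\cdot L}$. On the open set $U=\prod_i (K^{d_i}\setminus\{0\})$, this orbit is exactly the preimage, under the multi-projectivization $U\to \prod_i\PP^{d_i-1}$, of $\phi_A(\{x\in\PP^{n-1}:A_i x\neq 0\ \forall i\})$. Passing to closures, $\overline{(K^*)^m\cdot L}$ coincides with the multi-affine cone over $\overline{\phi_A(\PP^{n-1})}$, which is by definition $V(J_A)$. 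Since $J(V)^{\star}$ is radical (being CS) and $J_A$ is prime, the equality of zero sets upgrades to the equality of ideals $J(V)^{\star}=J_A$.

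Theorem~\ref{homoline} then immediately yields that $J_A$ is Cartwright--Sturmfels and that $T/J_A$ is Cohen--Macaulay normal; the domain property is clear from the irreducibility of the multiview variety. The main potential obstacle is the set-theoretic identification of $\overline{(K^*)^m\cdot L}$ with the multi-affine cone over the multiview variety; once one verifies this correspondence on the dense open locus $U$ (where everything reduces to the tautology that the image of $\Phi$ descends to the image of $\phi_A$), the rest is a formal consequence of Theorem~\ref{homoline}. This also matches the paper's alternative route via elimination from the $2$-minors of a multigraded matrix, but the linear-space approach seems the most direct.
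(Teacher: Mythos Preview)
Your proposal is correct and follows essentially the paper's second proof: identify $J_A$ with $J(V)^\star$ for the space $V$ of linear relations among the entries of the $A_ix$, then invoke Theorem~\ref{homoline} (and Brion's Theorem for Cohen--Macaulayness and normality). The only difference is that the paper establishes the identification $J_A=(\Ker\phi_0)^\star$ by the one-line algebraic observation that for a $\ZZ^m$-homogeneous $f$ one has $\phi(f)=y^{\deg f}\phi_0(f)$, whereas you argue via orbit closures and the Nullstellensatz; your route is fine but implicitly assumes $K$ algebraically closed for the passage from zero sets to ideals (harmless here, since the conclusions are stable under base change).
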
 

Theorem \ref{bingthm} is proved in \cite{AST} in the case $n=4$ and $d_i=3$ for all $i$, under the assumption that the $A_i$'s are generic. Later on Binglin Li in his yet unpublished preprint \cite{Binglin} proved Theorem~\ref{bingthm} in general. Furthermore he gave a combinatorial description of the multidegree and the generators of $J_A$. Our goal is giving two alternative proofs of Theorem~\ref{bingthm}. 

First of all we introduce  the algebraic objects needed to describe the problem. Notice that our point of view is somewhat dual to that of~\cite{Binglin}.  
 We denote by  $V_i$ the vector space of linear forms of $T=K[x_1,\dots, x_n]$ generated by the entries of the matrix $A_ix$ where $x$ is the column vector with entries $x_1,\dots, x_n$.  
Then   $V_1,\dots, V_m$ is  a collection of vectors spaces of linear forms of dimension $d_i=\dim_K V_i$. We define 
$$A(V_1,\dots,V_m)=K[V_1y_1,\dots, V_my_m] \subset T[y_1,\dots,y_m],$$ 
i.e. $A(V_1,\dots,V_m)$ is the subalgebra of the polynomial ring $T[y_1,\dots,y_m]$ generated by the elements $vy_i$ with $v\in V_i$. By construction   $A(V_1,\dots,V_m)$ is the multigraded coordinate ring of the closure of the image of $\phi_A$. The  $\ZZ^m$-graded structure on   $A(V_1,\dots,V_m)$ is induced by the assignment  $\deg y_i=e_i\in \ZZ^m$. 

We present  $A(V_1,\dots,V_m)$ as a quotient of $K[x_{ij} : i=1,\dots,m, \ \  j=1,\dots, d_i ]$ via the 
$K$-algebra surjection  
$$\phi: K[x_{ij} : i=1,\dots,m, \ \  j=1,\dots, d_i ]\to A(V_1,\dots,V_m)$$
defined by $\phi(x_{ij})=v_{ij}y_i$ where $\{v_{ij} : j=1,\dots, d_i\}$ is a basis of $V_i$.   
By construction $J_A=\ker \phi$. 

\begin{proof}[Proof of Theorem~\ref{bingthm}] {\it First proof:} We take the point of view of \cite{C}. Observe that $A(V_1,\dots,V_m)$ is a subring of the Segre product $K[x_iy_j : i=1,\dots,n \mbox{ and } j=1,\dots,  m]$ of the polynomial rings $T$ and $K[y_1,\dots, y_m]$.  The latter is defined as a quotient of the polynomial ring $K[x_{ij} :  i=1,\dots,m, \ \  j=1,\dots, m]$ by the ideal $I_2(X)$ of $2$-minors of the matrix $X=(x_{ij})$. 
Hence $J_A$ is obtained from $I_2(X)$ by performing a $\ZZ^m$-graded change of variables and then eliminating the variables $x_{ij}$ with $d_i<j\leq m$. Since $I_2(X)$ is a CS ideal, by~\cite[Theorem 1.6]{CDG2} we may conclude that $J_A$ is a CS ideal. 

\smallskip

\noindent{\it Second proof:} We consider the $K$-algebra map
$$\phi_0: K[x_{ij} : i=1,\dots,m, \ \  j=1,\dots, d_i ]\to T$$
defined by $\phi_0(x_{ij})=v_{ij}$.   Clearly $\Ker\phi_0$ is generated by linear forms, indeed by $\sum_{i=1}^m d_i-\dim_K \sum_{i=1}^m  V_i$ linear forms. 
By construction $\Ker \phi$ is the ideal generated by the $\ZZ^m$-homogeneous elements of $\Ker\phi_0$. With the notation introduced above: 
$$\Ker \phi=(\Ker\phi_0)^\star$$
and by Theorem~\ref{AB1} we conclude that $\Ker \phi$ is CS, i.e. $J_A$ is CS. 

Finally, Cohen-Macaulayness and normality follow from Brion's Theorem \ref{Brion}. 
\end{proof} 

\begin{ex} Let $m\leq d$ and $n=(m-1)d$. Let $A=\{A_i\}_{i=1,\dots,m}$ with $A_i$ generic of size $d\times n$.   
By genericity, we may choose coordinates such that  
$V_j=\langle x_{d(j-1)+1},x_{d(j-1)+2} ,\dots, x_{jd}\rangle $ for $j=1,\dots, m-1$ and 
$V_m=\langle v_1,\dots,  v_d\rangle$, with $v_h=-\sum_{j=1}^{m-1} x_{d(j-1)+h}$  for $h=1,\dots,d$.  
Then $\Ker \phi_0=(  \sum_{i=1}^m x_{ik} : k=1,\dots,d )$. It follows that the multiview ideal $J_A$, i.e. the ideal of the closure of the image of the rational map $\phi_A: \PP^{n-1} \dashrightarrow  \prod_{i=1}^m \PP^{d-1}$, 
is defined by the $m$-minors of the generic $m\times d$ matrix 
$$\left (
\begin{array}{ccccc}
x_{11} & x_{12} & \dots & x_{1d} \\
x_{21} & x_{22} & \dots & x_{2d} \\
\dots& \dots& \dots& \dots  \\
x_{m1} & x_{m2} & \dots & x_{md} \\
 \end{array}
\right ).
$$
\end{ex}

\end{document}